\title{Gaussian deconvolution on $\mathbb R^d$ \\
with application to self-repellent Brownian motion}
\author{
Yucheng Liu\,\orcidlink{0000-0002-1917-8330}\thanks{Department of Mathematics,
	University of British Columbia,
	Vancouver, BC, Canada V6T 1Z2.
	\href{mailto:yliu135@math.ubc.ca}{yliu135@math.ubc.ca}.
	}
}
\date{\vspace{-5ex}} 
\theoremstyle{plain}
\newtheorem{theorem}{Theorem}[section]
\newtheorem{lemma}[theorem]{Lemma}
\newtheorem{proposition}[theorem]{Proposition}
\newtheorem{assumption}[theorem]{Assumption}
\newtheorem{remark}[theorem]{Remark}
\numberwithin{equation}{section}
\newcommand{\ie}{i.e.}
\newcommand{\eg}{e.g.}
\newcommand{\eps}{\varepsilon}
\newcommand{\N}{\mathbb{N}}
\newcommand{\Z}{\mathbb{Z}}
\newcommand{\Q}{\mathbb{Q}}
\newcommand{\R}{\mathbb{R}}
\newcommand{\C}{\mathbb{C}}
\renewcommand{\P}{\mathbb{P}}
\newcommand{\T}{\mathbb{T}}
\providecommand{\Rd}{{\mathbb R^d}}
\providecommand{\Zd}{{\mathbb Z^d}}
\newcommand{\Ccal}{\mathcal{C}}
\newcommand{\grad}{\nabla}
\newcommand{\inv}{^{-1}}
\renewcommand{\(}{\left(}
\renewcommand{\)}{\right)}
\newcommand{\1}{\mathds{1}}
\newcommand{\nl}{\nonumber \\}
\newcommand{\loc}{_{\mathrm{loc}}}
\providecommand{\abs}[1]{\lvert#1\rvert}
\providecommand{\biggabs}[1]{\biggl\lvert#1\biggr\rvert}
\providecommand{\norm}[1]{\lVert#1\rVert}
\providecommand{\bignorm}[1]{\bigl\lVert#1\bigr\rVert}
\providecommand{\floor}[1]{\lfloor #1 \rfloor}
\newcommand{\crit}{_{\lambda_c}}
\newcommand{\KIR}{K_{\rm IR}}
\newcommand{\B}{{B_1}}
\begin{document}
\maketitle

\begin{abstract}
We consider the convolution equation $(\delta - J) * G = g$ on $\mathbb R^d$, $d>2$, where $\delta$ is the Dirac delta function and $J,g$ are given functions. 
We provide conditions on $J, g$ that ensure the deconvolution $G(x)$ to decay as $( x \cdot \Sigma\inv x)^{-(d-2)/2}$ for large $\abs x$, where $\Sigma$ is a positive-definite diagonal matrix. 
This extends a recent deconvolution theorem on $\mathbb Z^d$ proved by the author and Slade to the possibly anisotropic, continuum setting while maintaining its simplicity. 
Our motivation comes from studies of statistical mechanical models on $\mathbb R^d$ based on the lace expansion. 
As an example, we apply our theorem to a self-repellent Brownian motion in dimensions $d>4$, proving its critical two-point function to decay as $\abs x^{-(d-2)}$, like the Green function of the Laplace operator $\Delta$. 
\end{abstract}

%


\section{Introduction and results}
\subsection{Introduction}

Deconvolution theorems have been very useful in studies of statistical mechanical models on the integer lattice $\Zd$, above the upper critical dimension. 
In the 2000s, using convolution equations provided by the \emph{lace expansion}, 
Hara, van der Hofstad, Slade \cite{HHS03} and Hara \cite{Hara08}
proved deconvolution theorems that established $\abs x^{-(d-2)}$ decay of the critical two-point functions for the self-avoiding walk, Bernoulli bond percolation, and lattice trees and lattice animals. 
Recently, inspired by \cite{Slad22_lace},
a much simpler deconvolution theorem that yields the same results was proved by the author and Slade \cite{LS24a, LS24b}, using only elementary Fourier analysis and H\"older's inequality. 
The $\abs x^{-(d-2)}$ decay of the critical two-point function is useful, \eg, in percolation theory to study arm-exponents \cite{CHS23,KN11} and the incipient infinite cluster \cite{HJ04,KN09}.

The lace expansion was originally developed to study the self-avoiding walk in dimensions $d>4$ \cite{BS85,HS92a, BHK18, Slad22_lace}, 
and the method has been extended to many models on $\Zd$, 
including percolation in $d>6$ \cite{HS90a, HH17book, FH17}, 
Ising and $\varphi^4$ models in $d>4$ \cite{BHH21, Saka07, Saka15}, 
and lattice trees and lattice animals in $d>8$ \cite{HS90b,FH21}.
More recently, 
lace expansions have also been derived for statistical mechanical models on $\Rd$. 
These include the random connection model in $d>6$ \cite{HHLM19} and the self-repellent Brownian motion in $d > 4$ \cite{BKM24}.

Motivated by lace expansion equations on $\Rd$, we study convolution equations. 
Our main result is a simple deconvolution theorem on $\Rd$ similar to that of \cite{LS24a}, with several extensions. 
Unlike \cite{LS24a} which uses $\Zd$-symmetry of the functions, 
we use only the even symmetry. This generalisation allows us to obtain \emph{anisotropic} $\abs x^{-(d-2)}$ decay in the solution, which is more natural for models on $\Rd$ which might not possess symmetries of $\Zd$. 
We also formulate our hypotheses in terms of moments of the functions, which are weaker and often easier to verify than the decay hypotheses of \cite{LS24a, Hara08}. 
Regarding the method of proof, 
we maintain the simplicity of \cite{LS24a} by using the Fourier transform and weak derivatives in the Fourier space. 
However, since the Fourier dual of $\Rd$ is the non-compact $\Rd$, 
there are additional difficulties arising from the far-field behaviour of the Fourier transform, and we provide the extra control needed. 

As an example application, we apply our new deconvolution theorem to the self-repellent Brownian motion in dimensions $d>4$, and we prove that its critical two-point function is asymptotic to $\abs x^{-(d-2)}$ when $\abs x$ is large, improving the upper bound obtained in \cite{BKM24}.
In another work in preparation, 
we apply our result to the random connection model in dimensions $d>6$. This is a model where anisotropic $\abs x^{-(d-2)}$ decay of the critical connection probability is possible, by choosing a connection function that is not $\Zd$-symmetric.

\medskip\noindent
{\bf Notation.}  
We write
$a \vee b = \max \{ a , b \}$ and $a \wedge b = \min \{ a , b \}$.
We write
$f = O(g)$ or $f \lesssim g$ to mean there exists a constant $C> 0$ such that $\abs {f(x)} \le C\abs {g(x)}$, and $f= o(g)$ for $\lim f/g = 0$.

Given a multi-index $\alpha = (\alpha_1, \dots, \alpha_d) \in \N_0^d$ and $x\in \Rd$, 
we write $\abs \alpha = \sum_{i=1}^d \alpha_j$, $x^\alpha = \prod_{i=1}^d x_i^{\alpha_i}$, and $\grad^\alpha = \prod_{i=1}^d \grad_i^{\alpha_i}$.

\medskip\noindent
{\bf Fourier transform.}  
For functions $f, \hat g \in L^1(\Rd)$, the Fourier transform and its inverse are given by
\begin{equation}
\hat f(k) = \int_{\R^d} f(x) e^{ik\cdot x} dx	
	\quad (k \in \R^d),
\qquad
g(x) = \int_{\R^d} \hat g(k) e^{-ik\cdot x}  \frac{ dk }{ (2\pi)^d } 
	\quad (x\in \R^d) .
\end{equation}

\subsection{Gaussian deconvolution theorem}

Let $d > 2$. 
Let $(f*h)(x) = \int_\Rd f(y)h(x-y) dy$ denote the convolution of $L^1(\Rd)$ functions, and let $\delta$ denote the convolution identity (the Dirac delta function). We consider the convolution equation
\begin{equation} \label{eq:JG}
(\delta - J) * G = g ,
\end{equation}
where $J, g: \Rd \to \R$ are given functions that obey certain regularity assumptions. 
Our deconvolution theorem will show that the solution $G(x)$ to \eqref{eq:JG} decays as 
$( x \cdot \Sigma\inv x)^{-(d-2)/2}$ for large $\abs x$, 
where $\Sigma$ is a $J$-dependent diagonal matrix.
Apart from the possible anisotropy, this decay is
like the Green function $G_\Delta(x) = c_d \abs x^{-(d-2)}$ of the Laplace operator $\Delta$. 
In fact, 
if we take $g = J$ in \eqref{eq:JG}, then $G$ is the Green function of the operator $f \mapsto (\delta - J) * f$ up to a Dirac delta: Formally, $\delta + G$ satisfies the equation
\begin{equation}
(\delta - J) * ( \delta + G ) = (\delta - J) + g = \delta . 
\end{equation}
We prefer to work with functions rather than distributions, and we only use $\delta$ as a notation for the convolution identity. 

The assumptions on $J, g : \Rd \to \R$ are as follows.
\begin{assumption} \label{ass:J}
For both $h = J$ and $h = g$, 
we assume that $h$ is an even function (\ie, $h(-x) = h(x)$ for all $x$), that
\begin{equation} \label{eq:J_hyp}
h(x) \in L^1 \cap L^2(\R^d), 	\qquad
\abs x^2 h(x) \in L^1 \cap L^2(\R^d), 
\end{equation}
and that
\begin{equation} \label{eq:J_2+eps}
\abs x^{2+\eps} h(x) \in L^1(\R^d) 
\qquad \text{ for some $\eps>0$.}
\end{equation} 
If $d>4$, we further assume that
\begin{equation} \label{eq:J_d-2}
\abs x^{d-2} h(x) \in L^{p} \cap L^2(\R^d)
\qquad \text{for some $1 \le p < \frac d {4}$}. 
\end{equation}
For $J$, we assume $\hat J(0) = 1$ and
the following \emph{infrared bound:}
there is a constant $K_{\rm IR} > 0$ such that 
\begin{equation} \label{eq:infrared}
\hat J(0) - \hat J(k) \ge \KIR (\abs k^2 \wedge 1)
\qquad (k\in \Rd).
\end{equation}
\end{assumption}

\begin{remark}
\begin{enumerate}[label=(\roman*)]
\item
Conditions \eqref{eq:J_hyp}--\eqref{eq:J_d-2}
are implied by the decay estimate
\begin{align} \label{eq:h_decay}
\abs{ h(x) } \le \frac C { (1 + \abs x)^{d+2+\rho} } 
\qquad (x\in \Rd)
\end{align}
where $\rho > \frac{ d-8 } 2 \vee 0$.
This decay assumption is used in \cite{LS24a}.

\item
We do not assume $J(x) \ge 0$, so we cannot interpret $\delta - J$ as the generator of a random walk. 
Allowing negative values of $J(x)$ is important for applications based on the lace expansion. 

\end{enumerate}
\end{remark}

To handle the non-compact Fourier dual of $\Rd$, 
it is more convenient to work with the function $H = G - g$. Note that \eqref{eq:JG} holds if and only if
\begin{equation} \label{eq:JH}
(\delta - J) * H = J * g .
\end{equation}
Under Assumption~\ref{ass:J}, a solution to \eqref{eq:JH} is given by the Fourier integral
\begin{equation} \label{eq:Hint}
H(x) = \int_\Rd \frac{ \hat J(k) \hat g(k) }{ 1 - \hat J(k) }  e^{-ik\cdot x} \frac{ dk }{ (2\pi)^d } .
\end{equation}
The integral is well-defined in dimensions $d>2$,
because the integrand is bounded by $\KIR \inv \abs k^{-2} \norm J_1 \norm g_1$ when $\abs k \le 1$,
and it is
bounded by $\KIR\inv \abs{ \hat J(k) } \abs{ \hat g(k) }$ when $\abs k \ge 1$, which is integrable by the Cauchy--Schwarz inequality.
Also, if $d>4$, then \eqref{eq:Hint} is the unique solution to \eqref{eq:JH} in $L^2(\Rd)$, by the $L^2$ Fourier transform.

\begin{theorem}[Gaussian deconvolution]
\label{thm:deconv}
Let $d > 2$, let $a_d = \frac{ \Gamma(\frac{ d-2 } 2 ) }{ 2\pi^{d/2}}$, 
and let $J,g$ obey Assumption~\ref{ass:J}. 
Then the Fourier integral solution $H(x)$ to \eqref{eq:JH}, given in \eqref{eq:Hint}, obeys
\begin{align} \label{eq:H_asymp}
H(x) = 
\frac{ a_d \int_\Rd g(y) dy }{ \sqrt{\det \Sigma} } \frac 1 { ( x \cdot \Sigma\inv x )^{(d-2)/2} } 
	+ o\bigg( \frac 1 {\abs x^{d-2}} \bigg)
	\qquad \text{as $\abs x \to \infty$},
\end{align}
where $\Sigma$ is a diagonal matrix $\Sigma = \mathrm{diag} ( \int_\Rd x_i^2 J(x) dx : 1 \le i \le d )$. 

Additionally, if $g(x) = o(\abs x^{-(d-2)})$ as $\abs x\to \infty$, 
then the solution $G(x) = H(x) + g(x)$ to \eqref{eq:JG} obeys the same asymptotics \eqref{eq:H_asymp}. 
\end{theorem}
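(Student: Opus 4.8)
\emph{The plan} is to remove the anisotropy by a linear change of variables, then isolate the $k=0$ singularity of the integrand in \eqref{eq:Hint} with a smooth cutoff, treat the three resulting pieces, and translate back. The linear substitution $x\mapsto\Sigma^{1/2}x$ preserves evenness and Assumption~\ref{ass:J} (the $L^p$ and $L^2$ moment conditions are stable under a fixed invertible linear map), it preserves \eqref{eq:JH} and the representation \eqref{eq:Hint}, and it turns $\Sigma$ into the identity while leaving $\int g$ unchanged; so I may assume $\Sigma=I$. Then \eqref{eq:J_hyp}--\eqref{eq:J_2+eps} together with $\hat J(0)=1$ and evenness give a second-order expansion $1-\hat J(k)=\tfrac12\abs k^2+r_J(k)$ with $r_J(k)=o(\abs k^{2+\eps'})$ for some $\eps'>0$ (and similarly for $\hat g$), and the goal becomes $H(x)=a_d\hat g(0)\abs x^{-(d-2)}+o(\abs x^{-(d-2)})$. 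Write $\Phi(x)=a_d\abs x^{-(d-2)}$; this is the fundamental solution of $-\tfrac12\Del$, so $\hat\Phi(k)=2\abs k^{-2}$ as a tempered distribution. Fix $\chi\in C^\infty_c(\Rd)$ with $\chi\equiv1$ on a ball $B_{\rho_0}$ small enough that $1-\hat J\ge\tfrac14\abs k^2$ on $\supp\chi$, and decompose
\begin{equation*}
\frac{\hat J\hat g}{1-\hat J}=\underbrace{\hat g(0)\,\chi\,\hat\Phi}_{=:\,\hat H_0}\;+\;\underbrace{\chi\Bigl(\frac{\hat J\hat g}{1-\hat J}-\hat g(0)\hat\Phi\Bigr)}_{=:\,\hat H_1}\;+\;\underbrace{(1-\chi)\,\frac{\hat J\hat g}{1-\hat J}}_{=:\,\hat H_2},
\end{equation*}
so $H=H_0+H_1+H_2$.

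For the main term, $\hat H_0=\chi\cdot\widehat{(\hat g(0)\Phi)}$ gives $H_0=\hat g(0)\,(\Phi*\check\chi)$, and since $\int\check\chi=\chi(0)=1$,
\begin{equation*}
H_0(x)-\hat g(0)\Phi(x)=\hat g(0)\int_{\Rd}\bigl(\Phi(x-y)-\Phi(x)\bigr)\check\chi(y)\,dy .
\end{equation*}
Splitting the integral at $\abs y=\abs x/2$, using the Schwartz decay of $\check\chi$ on $\{\abs y>\abs x/2\}$ and, on the complement, the mean value theorem with $\abs{\grad\Phi(z)}\lesssim\abs z^{-(d-1)}$ for $\abs z\ge\abs x/2$, one obtains $H_0(x)=\hat g(0)\Phi(x)+O(\abs x^{-(d-1)})=\hat g(0)\Phi(x)+o(\abs x^{-(d-2)})$.

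For the remainders it suffices to prove $\grad^\gamma\hat H_i\in L^1(\Rd)$ for $i=1,2$ and every multi-index with $\abs\gamma=d-2$: then $x^\gamma H_i(x)\to0$ as $\abs x\to\infty$ by Riemann--Lebesgue, and since $\abs x\le\sum_j\abs{x_j}$ gives $\abs x^{d-2}\le\sum_{\abs\gamma=d-2}c_\gamma\abs{x^\gamma}$ with $c_\gamma\ge0$, it follows that $H_i(x)=o(\abs x^{-(d-2)})$. For the compactly supported $\hat H_1$ I would start from the identity $\frac{\hat J\hat g}{1-\hat J}-\hat g(0)\hat\Phi=\hat J\hat g\cdot\frac{-2r_J}{\abs k^2(1-\hat J)}+\frac{2(\hat J\hat g-\hat g(0))}{\abs k^2}$ and combine it with: (a) the $L^2$ halves of \eqref{eq:J_hyp} and \eqref{eq:J_d-2}, which by interpolation put $\hat J,\hat g$ and $\hat J\hat g$ in $W^{d-2,2}(\Rd)$; and (b) near-origin bounds for the Taylor remainders of $\hat J,\hat g$ and for $\abs k^2(1-\hat J)\gtrsim\abs k^4$, obtained by differentiating under the integral sign and estimating $\abs{\cos(k\cdot x)-(\text{partial Taylor sum})}$ against the weights $\abs x^{2+\eps}\abs h$ and $\abs x^{d-2}\abs h$ by H\"older's inequality; together these make the right-hand side and its weak derivatives of order $\le d-2$ of size $o(\abs k^{\eps'-d})$ near $0$, which is integrable there since $\eps'>0$. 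For $\hat H_2$ (whose support lies in $\{1-\hat J\ge c>0\}$ by \eqref{eq:infrared}) I would use $\hat H=\hat J\hat g+\hat J\hat g\cdot\hat J/(1-\hat J)$ and, for $\abs k$ large, the convergent expansion $\hat H=\sum_{m\ge1}\hat J^m\hat g$, and combine $\grad^\gamma(\hat J\hat g)\in L^1$ for $\abs\gamma\le d-2$ (Cauchy--Schwarz, from the $L^2$ moments) with the fact that $W^{d-2,2}(\Rd)$ is a Banach algebra stable under composition with smooth functions when $d>4$ (the range $2<d\le4$ being elementary, only first or second derivatives arising), to conclude $\grad^\gamma\hat H_2\in L^1$ for $\abs\gamma\le d-2$. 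Assembling $H_0,H_1,H_2$ and undoing the change of variables yields \eqref{eq:H_asymp}, and the last assertion is immediate since $G=H+g$.

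The genuine difficulty I expect is the control of $\hat H_1$ near $k=0$ and of $\hat H_2$ for large $\abs k$: because $\hat J$ and $\hat g$ carry only the Sobolev regularity their moments provide, not classical smoothness, and because the Fourier dual $\Rd$ is non-compact, one cannot bound every weak derivative in $L^1$ as casually as on the compact dual of $\Zd$; this is precisely where the hypotheses \eqref{eq:J_2+eps}--\eqref{eq:J_d-2} and H\"older's inequality have to replace the pointwise decay of $J$ used in \cite{LS24a,Hara08}.
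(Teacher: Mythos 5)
Your proposal is correct in outline but takes a genuinely different route from the paper's. Where you normalise $\Sigma$ to the identity by a linear change of variables and then subtract the cut-off continuum Green function $\hat g(0)\,\chi(k)\cdot 2\abs k^{-2}$, the paper subtracts $\hat g(0)\hat C$ with $\hat C=\hat D^2/(1-\hat D)$, the two-point function of a Gaussian random walk whose step covariance is exactly the $\Sigma$ of \eqref{eq:def_Sigma}; Lemma~\ref{lem:C} then supplies the asymptotics of the main term in place of your convolution estimate for $\Phi*\check\chi$, and no cutoff, no change of variables, and no matching of three pieces is needed because $\hat C$ is already globally tame. The cancellation mechanism is the same in both treatments: the quadratic form in the comparison function is matched to the Hessian of $1-\hat J$ at $k=0$ (your normalisation $\Sigma=I$; the paper's choice \eqref{eq:def_Sigma} forcing $\grad_{ii}\hat E(0)=0$), after which the error has Fourier transform $O(\abs k^{\eps-2})$ near the origin, and one concludes via $\grad^\gamma(\cdot)\in L^1$ for $\abs\gamma=d-2$ plus Riemann--Lebesgue, exactly as you propose. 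Your route buys a more classical-looking main term; the paper's buys a single global decomposition \eqref{eq:decomp}--\eqref{eq:def_E} whose error is analysed uniformly in Proposition~\ref{prop:f}.

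The soft spot is the error-term analysis for $\hat H_1$, in two respects. First, the claim that the weak derivatives of order $\le d-2$ are ``of size $o(\abs k^{\eps'-d})$ near $0$'' cannot be meant pointwise: for $\abs\gamma\ge 3$ the derivatives $\grad^\gamma\hat J$, $\grad^\gamma\hat g$ are merely $L^p$ functions, so the statement must be an $L^p\loc$ statement and the conclusion $\grad^\gamma\hat H_1\in L^1\loc$ must come from H\"older bookkeeping. Second, and substantively: interpolating the $L^2$ moments into $W^{d-2,2}(\Rd)$ is \emph{not} enough to close that bookkeeping when $4<d<8$. After differentiating your identity, terms of the schematic form $\grad^\gamma\hat J\cdot\abs k^{-2}(1-\hat J)\inv\,\1_\B$ with $\abs\gamma=d-2$ appear (all derivatives falling on $r_J$ or on $(1-\hat J)\inv$), and $\abs k^{-4}\1_\B\in L^{p}\loc$ only for $p\inv>4/d$; with $\grad^\gamma\hat J\in L^2$ alone, H\"older gives $L^r\loc$ only for $r\inv>\tfrac12+\tfrac4d$, which exceeds $1$ in that dimension range. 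This is precisely why the paper assumes \eqref{eq:J_d-2} with $p<d/4$ and routes it through the Hausdorff--Young inequality (Lemmas~\ref{lem:interp} and~\ref{lem:h}) to obtain $\grad^\gamma\hat h\in L^{d/(\abs\gamma-2-\eps)}\loc$, the exponent that makes the count $(\abs\gamma+2-\eps)/d<1$ close in Lemma~\ref{lem:E}. You name the right ingredient (the weight $\abs x^{d-2}\abs h$ and H\"older) but your stated plan does not deploy it where it is indispensable. With that repair, and the product-rule counting on $\{\abs k\ge\rho_0\}$ for $\hat H_2$ (the paper's Lemma~\ref{lem:prod_h}, which also covers $2<d\le 4$ without invoking the algebra property of $W^{d-2,2}$), your argument goes through.
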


We remark that the condition $\hat J(0) = 1$ in Assumption~\ref{ass:J} is a criticality condition and is responsible for the polynomial decay of $H(x)$ in \eqref{eq:H_asymp}.
If $\hat J(0) < 1$ instead, we expect $H(x)$ to decay exponentially. 
With ideas from \cite{Liu24, Slad23_wsaw}, we expect the methods developed in this paper to extend to subcritical two-point functions, to yield a uniform upper bound
\begin{equation} \label{eq:near_crit}
H(x) \le \frac C { 1 \vee \abs x^{d-2} } e^{- c m_J \abs x} 
	\qquad (x\in \Rd) ,
\end{equation}
where $c\in (0,1)$ and $m_J$ is the rate of exponential decay of $H(x)$.
We expect \eqref{eq:near_crit} to be useful in studying statistical mechanical models on the continuum torus; see \cite{LPS25t_prep} and references therein for the discrete setting.

\subsection{Application to self-repellent Brownian motion}

We apply our deconvolution theorem to a self-repellent Brownian motion in dimensions $d > 4$, studied recently by \cite{BKM24}. 
This model is similar to the weakly self-avoiding walk on $\Zd$, but it allows a penalty based on path interactions. 

The model is defined as follows. For $N \in \N$, 
let $\Ccal_N$ denote the set of continuous functions from $[0,N]$ to $\Rd$.
For a function $B \in \Ccal_N$ and an integer $j \in [1, N]$, 
we define the \emph{$j$-th leg} of $B$, denoted by $B_j$, to be the function $B_j(s) = B( j-1 + s)$, defined for $s \in [0,1]$. 
We fix a bounded continuous function $v : [0,\infty) \to [0,\infty)$ with compact support, and define the following Hamiltonian for $B \in \Ccal_N$:
\begin{align}
H_N(B) = \sum_{1 \le i < j \le N } V( B_i, B_j ) ,
\qquad
V(f,g) = \int_0^1 v(\abs{f(s) - g(s)}) ds .
\end{align}
Let $\alpha > 0 $ be a parameter, 
and let $\P_N$ denote the standard Wiener measure on $\Ccal_N$. 
We define the measure $\Q_{\alpha, N}$ on $\Ccal_N$ by
\begin{equation}
\frac{d \Q_{\alpha, N} }{ d \P_N} (B) = e^{-\alpha H_N(B)} .
\end{equation}
The normalised version of $\Q_{\alpha, N}$ is the self-repellent Brownian motion. 
The \emph{two-point function} for the self-repellent Brownian motion is defined, for $\lambda \ge 0$, as
\begin{align}
G_{\alpha,\lambda}(x) = \sum_{N=1}^\infty \lambda^N \Gamma_{\alpha,N}(x), 
\qquad \text{where} \quad
\Gamma_{\alpha,N}(x) = \frac{ \Q_{\alpha,N}( B_N \in dx ) } { dx }
\end{align}
is the density function of the marginal distribution of $B_N$.
A standard subadditivity argument implies the existence of $\lambda_c(\alpha) $ such that 
$ \norm{ G_{\alpha,\lambda} }_1 < \infty$ if and only if $\lambda <  \lambda_c(\alpha)$, so the sum defining $G_{\alpha,\lambda}(x)$ converges at least for $\lambda <  \lambda_c(\alpha)$.

In dimensions $d > 4$,
using a new lace expansion, 
\cite[Theorem~4.1]{BKM24} established the following Gaussian domination bound for $\alpha$ sufficiently small: For all $x \in \Rd$,
\begin{align} \label{eq:BM_dominate}
G_{\alpha, \lambda_c(\alpha) } (x)
\le 5 C_\varphi(x) ,
\qquad
C_\varphi(x) = \sum_{n=1}^\infty \varphi_n(x) ,
\end{align}
where $\varphi_t(x) = (2\pi t)^{-d/2} \exp( - \abs x^2 / 2t )$ is the density function of the (centred) Gaussian distribution on $\R^d$ with covariance matrix $t \times \mathrm{Id}$. 
In particular, this implies 
$G_{\alpha, \lambda_c(\alpha) } (x) \le O(\abs x^{-(d-2)})$ as $\abs x \to \infty$, since $C_\varphi(x) \sim a_d \abs x^{-(d-2)}$ (see Lemma~\ref{lem:C} or \cite[Lemma~7.3]{BKM24}).
Building on their results, we improve to an asymptotic formula. 

\begin{theorem} \label{thm:BM}
Let $d > 4$ and $\alpha$ be sufficiently small.
Then there is a constant $c_d = a_d (1 + O(\alpha))$ such that 
\begin{equation} \label{eq:Gcrit_asymp}
G_{\alpha, \lambda_c(\alpha) } (x) 
\sim \frac{ c_d } { \abs x^{d-2} }  
\qquad \text{as $\abs x \to \infty$} .
\end{equation}
\end{theorem}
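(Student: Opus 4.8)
The plan is to apply Theorem~\ref{thm:deconv} to the lace expansion equation that \cite{BKM24} establish for the self-repellent Brownian motion. The first step is to recall the form of that expansion: at criticality $\lambda = \lambda_c(\alpha)$, the two-point function $G_{\alpha}(x) := G_{\alpha, \lambda_c(\alpha)}(x)$ should satisfy a convolution identity of the shape $(\delta - J_\alpha) * G_\alpha = g_\alpha$, where $J_\alpha$ is built from the free Brownian transition density $C_\varphi$ together with the lace-expansion coefficients $\Pi_{\alpha, \lambda_c}$, and $g_\alpha$ is a suitable remainder (typically $g_\alpha = C_\varphi + (\text{something involving }\Pi)$, or after symmetrising, the ``$J*g$'' form of \eqref{eq:JH}). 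The criticality condition $\hat J_\alpha(0) = 1$ is exactly the statement that $\lambda = \lambda_c(\alpha)$ is the radius of convergence, i.e. the susceptibility $\hat G_\alpha(0) = \hat g_\alpha(0)/(1 - \hat J_\alpha(0))$ diverges precisely there; this identification of $\lambda_c$ with the solution of $\hat J_\alpha(0)=1$ is standard in lace-expansion arguments and should be quotable from \cite{BKM24}.

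The second, and main, step is to verify that $J_\alpha$ and $g_\alpha$ satisfy Assumption~\ref{ass:J} for $\alpha$ small. Here I would lean heavily on the Gaussian domination bound \eqref{eq:BM_dominate}, $G_\alpha(x) \le 5 C_\varphi(x)$, together with the decay $C_\varphi(x) \sim a_d \abs x^{-(d-2)}$ from Lemma~\ref{lem:C}, and on the bounds \cite{BKM24} prove for the lace-expansion coefficients $\Pi_{\alpha,\lambda_c}$ (which for small $\alpha$ are $O(\alpha)$ and decay at least as fast as a convolution power of $C_\varphi$, hence like $\abs x^{-2(d-2)}$ or faster up to logs, which beats $\abs x^{-(d+2+\rho)}$ for $d > 4$). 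Since $J_\alpha$ is essentially $\lambda_c C_\varphi$ plus the $\Pi$-terms, and $C_\varphi$ itself has Gaussian tails, $J_\alpha$ comfortably satisfies the decay bound \eqref{eq:h_decay}, hence the moment conditions \eqref{eq:J_hyp}--\eqref{eq:J_d-2}; the same goes for $g_\alpha$. The infrared bound \eqref{eq:infrared} for $\hat J_\alpha$ follows because $\hat C_\varphi(0) - \hat C_\varphi(k) \asymp \abs k^2 \wedge 1$ (the massless Gaussian has a genuine $\abs k^2$ behaviour near $0$ and $\hat C_\varphi$ stays below $\hat C_\varphi(0)$ away from $0$), and the $O(\alpha)$ perturbation from $\Pi$ cannot destroy this for $\alpha$ small — one writes $\hat J_\alpha(0) - \hat J_\alpha(k) = \lambda_c(\hat C_\varphi(0) - \hat C_\varphi(k)) + (\hat\Pi(0) - \hat\Pi(k))$ and bounds the last difference by $O(\alpha)(\abs k^2 \wedge 1)$ using that $\abs x^2 \Pi(x) \in L^1$ with small norm. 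One also needs $g_\alpha(x) = o(\abs x^{-(d-2)})$ so that the ``additionally'' clause of Theorem~\ref{thm:deconv} applies and gives the asymptotics of $G_\alpha = H + g_\alpha$ itself; this again follows from the faster-than-$\abs x^{-(d-2)}$ decay of the $\Pi$-built pieces and the Gaussian decay of any $C_\varphi$-pieces that survive in $g_\alpha$ after the rearrangement into \eqref{eq:JH}.

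The third step is bookkeeping: Theorem~\ref{thm:deconv} then yields
\begin{equation}
G_\alpha(x) = \frac{a_d \int_{\Rd} g_\alpha(y)\,dy}{\sqrt{\det \Sigma_\alpha}}\,\frac{1}{(x \cdot \Sigma_\alpha\inv x)^{(d-2)/2}} + o(\abs x^{-(d-2)}),
\end{equation}
where $\Sigma_\alpha = \mathrm{diag}(\int_{\Rd} x_i^2 J_\alpha(x)\,dx)$. Because $v$ depends only on $\abs{f(s)-g(s)}$, the whole construction is rotationally (in particular $\Zd$-)symmetric, so $J_\alpha$ is radially symmetric and $\Sigma_\alpha = \sigma_\alpha^2\,\mathrm{Id}$ with $\sigma_\alpha^2 = \frac1d\int_{\Rd}\abs x^2 J_\alpha(x)\,dx$; one checks $\sigma_\alpha^2 = 1 + O(\alpha)$ using $\int \abs x^2 C_\varphi(x)\,dx$ (finite since $d>2$ — actually one should double-check this moment converges; $C_\varphi(x)\sim a_d\abs x^{-(d-2)}$ so $\abs x^2 C_\varphi$ is integrable iff $d>4$, which holds here) normalised appropriately, plus the small $\Pi$-correction. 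This collapses the anisotropic form to the isotropic $c_d\abs x^{-(d-2)}$ with $c_d = a_d\,\sigma_\alpha^{-(d-2)}\int g_\alpha(y)\,dy / \sigma_\alpha^{?}$ — more precisely $c_d = a_d (\int g_\alpha)/\sigma_\alpha^{d} \cdot \sigma_\alpha^{d-2}$-type algebra giving $c_d = a_d(1 + O(\alpha))$ once one verifies $\int g_\alpha(y)\,dy = 1 + O(\alpha)$ (this is a statement about $\hat g_\alpha(0)$, computable from the expansion).

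\textbf{The main obstacle} I anticipate is not any single estimate but correctly extracting and normalising the convolution equation from \cite{BKM24}'s lace expansion — pinning down exactly what $J_\alpha$ and $g_\alpha$ are, confirming $\hat J_\alpha(0) = 1$ holds at $\lambda_c(\alpha)$ (as opposed to merely $\le 1$), and tracking the constants through the rearrangement \eqref{eq:JG}$\leftrightarrow$\eqref{eq:JH} so that the final $c_d = a_d(1+O(\alpha))$ comes out cleanly. The analytic verification of Assumption~\ref{ass:J} should then be routine given the Gaussian domination bound and the standard diagrammatic bounds on the lace-expansion coefficients, all of which \cite{BKM24} supply.
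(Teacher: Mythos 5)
Your overall strategy is exactly the paper's: rearrange the lace expansion of \cite{BKM24} into \eqref{eq:JG}, verify Assumption~\ref{ass:J} using Gaussian domination and the bounds on $\Pi_{\lambda_c}$, identify $\hat J(0)=1$ at criticality via the divergence of the susceptibility, and collapse the isotropic $\Sigma=\sigma^2\,\mathrm{Id}$ with $\sigma^2=1+O(\alpha)$. However, there is one concrete confusion that would sink the verification if taken literally: you build $J_\alpha$ and $g_\alpha$ out of $C_\varphi=\sum_{n\ge1}\varphi_n$ and assert that ``$C_\varphi$ itself has Gaussian tails.'' It does not --- $C_\varphi(x)\sim a_d\abs x^{-(d-2)}$ (your own parenthetical about $\int\abs x^2C_\varphi$ concedes this). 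If $J$ or $g$ contained a $C_\varphi$ piece, then $\abs x^{d-2}h(x)$ would not even be in $L^2$, so \eqref{eq:J_d-2} would fail, and $g=o(\abs x^{-(d-2)})$ would fail as well. The correct input, from \cite[Remark~2]{BKM24}, is $G_\lambda=(\lambda\varphi_1+\Pi_\lambda)+(\lambda\varphi_1+\Pi_\lambda)*G_\lambda$, so that $J=g=\lambda_c\varphi_1+\Pi_{\lambda_c}$ with $\varphi_1$ the \emph{single-step} Gaussian density (genuinely Gaussian tails) and $\abs{\Pi_{\lambda_c}(x)}\lesssim\alpha(1+\abs x)^{-3(d-2)}$; then \eqref{eq:h_decay} holds with $\rho=2(d-4)>\frac{d-8}{2}\vee0$ and everything you outline goes through. (With $g=J$ one also gets $\hat g(0)=\hat J(0)=1$ for free, rather than as a separate computation.)

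A second, smaller gap: Theorem~\ref{thm:deconv} gives the asymptotics of the \emph{particular} solution $H$ defined by the Fourier integral \eqref{eq:Hint}, so you must still argue that $G_{\lambda_c}-g$ coincides with that $H$. The paper does this by noting that $G_{\lambda_c}-g\in L^2(\Rd)$ (by Gaussian domination) and invoking uniqueness of $L^2$ solutions to \eqref{eq:JH} in $d>4$; your write-up skips this identification entirely.
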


The proof of Theorem~\ref{thm:BM} is a direct verification of the hypotheses of Theorem~\ref{thm:deconv},
using results obtained in \cite{BKM24}.
Besides giving an asymptotic formula, 
we believe our methods can also be used to give an alternative bootstrap argument, similar to that for the weakly self-avoiding walk in \cite{Slad22_lace}, 
which produces Theorem~\ref{thm:BM} directly. 
We do not pursue this here.

\subsection{Strategy of proof}

For the proof of Theorem~\ref{thm:deconv},
we follow the framework of \cite{LS24a, Slad22_lace} to isolate the leading decay of $H(x)$ using a random walk two-point function, and then we show that the remainder is an error term using Fourier analysis. 
The idea of isolating the leading decay originated from \cite{HHS03}.
Unlike all previous works, we choose a random walk based on the function $J(x)$.

We use  a Gaussian random walk on $\Rd$.
Given a positive-definite matrix $\Sigma \in \R^{d \times d}$ (to be chosen later), 
we denote the density function of the (centred) Gaussian distribution on $\R^d$ with covariance matrix $\Sigma$ by
\begin{equation}
D(x) = 
\frac 1 {(2\pi)^{d/2} \sqrt{ \det \Sigma} } \exp( -x \cdot \Sigma\inv x / 2 ) .
\end{equation} 
Using the fact that the Fourier transform of a Gaussian function is Gaussian, we readily get an infrared bound 
\begin{equation}\label{eq:D_infrared}
1 - \hat D(k)  = \hat D(0) - \hat D(k) 
\ge K_{ {\rm IR}, \Sigma} (\abs k^2 \wedge 1)
	\qquad (k\in \R^d)
\end{equation}
with some constant $K_{ {\rm IR}, \Sigma} > 0$. 
With $D^{*n}$ denoting the $n$-fold convolution of $D$ with itself,
we define
\begin{equation}
C (x) = \sum_{n=2}^\infty D^{*n}(x),
\end{equation} 
which is the critical two-point function of the random walk without the zeroth and the first step. 
The function $C(x)$ satisfies the recurrence relation $C =  D^{*2} + D * C$, and it admits the Fourier integral representation
\begin{equation} \label{eq:Cint}
C(x) = \int_{\R^d} \frac{  \hat D(k)^2 }{ 1 -  \hat D(k) }  e^{-ik\cdot x} \frac{ dk }{ (2\pi)^d }
\end{equation}
in dimensions $d>2$ (cf. \eqref{eq:JH}--\eqref{eq:Hint}).
We will use the decay of $C(x)$, given by the next lemma. 

\begin{lemma} \label{lem:C}
Let $d > 2$ and let $ a_d = \frac{ \Gamma(\frac{ d-2 } 2 ) }{ 2\pi^{d/2}}$. 
We have
\begin{equation}
C(x)  =  \frac{ a_d }{ \sqrt{\det \Sigma} } \frac 1 { ( x \cdot \Sigma\inv x )^{(d-2)/2} } 
	+ O_\Sigma \bigg( \frac 1 {  \abs x^{d+2} } \bigg)
	\qquad \text{as $\abs x \to \infty$.}
\end{equation}
\end{lemma}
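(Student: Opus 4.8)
The plan is to evaluate $C(x)$ essentially by hand, exploiting the fact that each $D^{*n}$ is again a Gaussian density, so that the series defining $C$ can be compared term by term with a heat-kernel integral. First I would record that, since $D$ is the centred Gaussian density with covariance $\Sigma$, its $n$-fold convolution is the centred Gaussian density with covariance $n\Sigma$; explicitly,
\[
D^{*n}(x) = \frac{1}{\sqrt{\det\Sigma}}\, g_a(n),
\qquad
g_a(t) := (2\pi t)^{-d/2}\, e^{-a/(2t)},
\qquad
a := x\cdot\Sigma\inv x ,
\]
so that $C(x) = (\det\Sigma)^{-1/2}\sum_{n=2}^\infty g_a(n)$. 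A standard $\Gamma$-integral (substitute $u = a/(2t)$) gives
\[
\int_0^\infty g_a(t)\, dt
= \frac{\Gamma(\tfrac{d-2}{2})}{2\pi^{d/2}}\, a^{-(d-2)/2}
= a_d\,(x\cdot\Sigma\inv x)^{-(d-2)/2},
\]
which is $\sqrt{\det\Sigma}$ times the asserted leading term; here $d>2$ is used precisely so that the integral converges at $t=\infty$. It therefore suffices to prove
\[
\sum_{n=2}^\infty g_a(n) - \int_0^\infty g_a(t)\, dt = O\bigl(a^{-(d+2)/2}\bigr),
\]
since $a = x\cdot\Sigma\inv x$ is comparable to $\abs x^2$ with constants depending only on $\Sigma$ and the prefactor $(\det\Sigma)^{-1/2}$ is a $\Sigma$-constant; throughout one may assume $\abs x \ge 1$.

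To compare the sum with the integral I would split $\int_0^\infty = \int_0^{3/2} + \int_{3/2}^\infty$. On $[0,3/2]$, the bound $\sup_{s>0} s^{d/2}e^{-s} < \infty$ applied with $s = a/(4t)$ gives $g_a(t) \le C_d\, a^{-d/2}\, e^{-a/(4t)} \le C_d\, a^{-d/2}\, e^{-a/6}$, so $\int_0^{3/2} g_a(t)\,dt = O(e^{-a/6})$; the same estimate shows that every $g_a(n)$ with $n$ bounded is exponentially small in $a$, so there is no boundary contribution at the left endpoint. For $\int_{3/2}^\infty$ I would invoke the midpoint quadrature rule: for each integer $n\ge 2$ there is $\xi_n\in(n-\tfrac12,n+\tfrac12)$ with $g_a(n) - \int_{n-1/2}^{n+1/2} g_a(t)\, dt = -\tfrac{1}{24}\, g_a''(\xi_n)$, whence $\bigl|\sum_{n\ge2} g_a(n) - \int_{3/2}^\infty g_a(t)\,dt\bigr| \le \tfrac1{24}\sum_{n\ge2}\sup_{[n-1/2,\,n+1/2]}\abs{g_a''}$. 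The crucial input is the scaling identity $g_a(t) = a^{-d/2}g_1(t/a)$, which differentiates to $g_a''(t) = a^{-d/2-2}g_1''(t/a)$; hence the last sum is $a^{-d/2-2}$ times a Riemann-type sum for $\int_0^\infty\abs{g_1''}$ with mesh $1/a$, and this is $O(a)$ because $g_1$ (and so $g_1''$, $g_1'''$) is smooth, super-exponentially small as $t\downarrow0$, and decays like $t^{-d/2}$ as $t\to\infty$. Altogether $\sum_{n\ge2}\sup\abs{g_a''} = O(a^{-d/2-2}\cdot a) = O(a^{-(d+2)/2})$, which closes the argument.

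The step needing the most care is this last one. It is essential to use the \emph{second}-derivative (midpoint) quadrature bound rather than a cruder first-derivative one: the identity $g_a'(t) = a^{-d/2-1}g_1'(t/a)$ only gives $\sum_{n\ge2}\sup\abs{g_a'} = O(a^{-d/2})$, hence the weaker remainder $O(\abs x^{-d})$, which is not good enough. One must also verify that the Riemann-sum comparison does not lose the factor of $a$ — equivalently, that $g_a''$ is concentrated on a $t$-interval of length $\asymp a$ (its peak is at $t_\ast=a/d$, of width $\asymp a$) and that the tail $t\gg a$ contributes only $O(a^{-(d+2)/2})$ — both of which are immediate from the scaling identity and the integrability of $g_1''$. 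Everything else — the $\Gamma$-integral, the midpoint remainder formula, and the pointwise bound on $[0,3/2]$ — is routine, and the whole proof is short. In the isotropic case $\Sigma=\mathrm{Id}$ this recovers the classical evaluation of $\sum_n\varphi_n(x)$ underlying \cite[Lemma~7.3]{BKM24}.
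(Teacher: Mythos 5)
Your proposal is correct and follows essentially the same route as the paper: compute $D^{*n}$ explicitly as the Gaussian with covariance $n\Sigma$ and then extract the asymptotics of the resulting series $\sum_n (2\pi n)^{-d/2}e^{-a/(2n)}$. The only difference is that where the paper cites \cite[Lemma~4.3.2]{LL10} for these asymptotics, you prove them directly via the $\Gamma$-integral plus a midpoint-rule sum-versus-integral comparison (correctly noting that the second-order quadrature error, not the first-order one, is needed to reach the $O(a^{-(d+2)/2})$ remainder), which makes the argument self-contained.
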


\begin{proof}
Since $D(x)$ is Gaussian, the convolutions $D^{*n}(x)$ can be calculated explicitly. By adding the first step of the random walk and then computing the series using \cite[Lemma~4.3.2]{LL10}, 
we have
\begin{align}
D(x) + C(x)
&= \frac 1 { (2\pi )^{d/2} \sqrt{\det \Sigma} } 
	\sum_{n=1}^\infty \frac 1 { n ^{d/2} } e^{ - x \cdot \Sigma\inv x / 2n } \nl
&= \frac{ \Gamma(\frac{ d-2 } 2 ) }{ 2\pi^{d/2} \sqrt{\det \Sigma} } 
	\frac 1 { ( x \cdot \Sigma\inv x )^{(d-2)/2} }
	+ O \bigg( \frac 1 {  ( x \cdot \Sigma\inv x )^{(d+2)/2} } \bigg)
\end{align}
as $\abs x \to\infty$. 
This gives the desired result since $D(x)$ can be absorbed into the error term. 
\end{proof}

Let $J,g$ obey Assumption~\ref{ass:J}. 
In view of the decay of $C(x)$, 
we want to choose the matrix $\Sigma$ appropriately, so that the decomposition
\begin{equation} \label{eq:decomp}
H(x) = \hat g(0) C(x) + f(x)
\end{equation}
gives a remainder $f(x)$ that decays faster than $\abs x^{-(d-2)}$. 
Since both $H(x)$ and $C(x)$ can be written as Fourier integrals, we decompose in the Fourier space, as
\begin{align} \label{eq:decomp_fourier}
\hat H
= \frac{ \hat J \hat g }{ 1 - \hat J } 
= \hat g(0) \frac{ \hat D^2 }{ 1 - \hat D }  
	+ \frac{ \hat g \hat J (1-\hat D) - \hat g(0) \hat D^2(1-\hat J) }{ (1-\hat D)(1-\hat J) }
= \hat g(0) \hat C + \hat f, 
\end{align}
where $\hat f$ is \emph{defined by}
\begin{equation} \label{eq:def_E}
\hat f =  \frac{ \hat E }{ (1-\hat D)(1-\hat J) }, 
\qquad
E = (g*J - g*J*D) - \hat g(0) (D*D - D*D*J). 
\end{equation}
The remainder $f(x)$ in \eqref{eq:decomp} is then given by the inverse Fourier transform of $\hat f$.

The good choice of $\Sigma$ turns out to be the diagonal matrix
\begin{align} \label{eq:def_Sigma}
\Sigma = \mathrm{diag} \bigg( \int_\Rd x_i^2 J(x) dx : 1 \le i \le d \bigg) , 
\end{align}
which is positive-definite by the evenness of $J(x)$ and
the infrared bound \eqref{eq:infrared}. 
The choice \eqref{eq:def_Sigma} is to ensure that all second derivatives of $\hat E(k)$ vanish at $k=0$. 
Intuitively, since also $\hat E(0) = 0$ and all first derivatives of $\hat E(k)$ vanish at $k=0$ by evenness of $E(x)$, we expect $\abs{\hat E(k)} \approx \abs k^{2+\sigma}$ for some $\sigma > 0$. Combined with the infrared bounds \eqref{eq:infrared} and \eqref{eq:D_infrared}, we get that $\abs{ \hat f(k) } \lesssim \abs k^{\sigma - 2}$, which is less singular than $\abs{\hat C(k)} \approx \abs k^{-2}$ near $k=0$. This more regular behaviour should transfer to faster decay of $f(x)$ as $\abs x\to \infty$. 
The next proposition gives a precise statement about the regularity of $\hat f$. 

We use the notation of \emph{weak derivatives}; see \cite[Chapter~5]{Evan10} or \cite[Appendix~A]{LS24a} for an introduction. 
(\cite{LS24a} works with the continuum torus $\T^d = (\R / 2\pi \Z)^d$, but proofs for $\Rd$ are the essentially same and require only replacing all $L^p(\T^d)$ spaces by \emph{local} $L^p(\Rd)$ spaces, since test functions are compactly supported.)

\begin{proposition} \label{prop:f}
Let $d > 2$
and let $J,g$ obey Assumption~\ref{ass:J}. 
Define $\Sigma$ by \eqref{eq:def_Sigma}.
Then $\hat f$ is $d-2$ times weakly differentiable on $\R^d$, and $\grad^\alpha \hat f \in L^1(\R^d)$ for all multi-indices $\alpha$ with $\abs \alpha \le d-2$. 
\end{proposition}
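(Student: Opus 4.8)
The plan is to establish the claimed regularity of $\hat f = \hat E / [(1-\hat D)(1-\hat J)]$ by separately controlling the smoothness of the numerator $\hat E$ and the (lack of) smoothness of the denominator, and then applying the Leibniz rule for weak derivatives together with the infrared bounds to bound everything in $L^1$.

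First I would analyse $\hat E$. Recall $E = (g*J - g*J*D) - \hat g(0)(D*D - D*D*J)$. Since $J$ and $g$ satisfy the moment hypotheses \eqref{eq:J_hyp}--\eqref{eq:J_d-2}, and $D$ is a Gaussian (hence has all moments and is Schwartz), the function $x^\alpha E(x)$ lies in $L^1(\Rd)$ for every multi-index $\alpha$ with $|\alpha| \le d-2$ (when $d \le 4$ we only need $|\alpha|\le 2+\eps$, covered by \eqref{eq:J_2+eps}; for $d > 4$ the critical-moment hypothesis \eqref{eq:J_d-2} supplies the top-order moments, after interpolating the intermediate ones via $|x|^k \le 1 + |x|^{d-2}$ for $2 \le k \le d-2$). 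By the standard fact that $L^1$-integrability of $x^\alpha E(x)$ gives continuous (classical) derivatives $\grad^\alpha \hat E$ up to order $|\alpha|$, $\hat E$ is genuinely $C^{d-2}$ and all these derivatives are bounded. The key quantitative input is the vanishing of $\hat E$ to high order at $k=0$: by evenness of $E$, $\hat E$ is even, so $\grad^\alpha \hat E(0) = 0$ for all odd $|\alpha|$; $\hat E(0) = 0$ because $\int E = \int g \int J(1-\hat D(0)) - \hat g(0)\int D\int D(1-\hat J(0)) = 0$ using $\hat J(0)=\hat D(0)=1$; and the second derivatives $\grad_i\grad_j\hat E(0)$ vanish — the diagonal ones precisely because $\Sigma$ was chosen as in \eqref{eq:def_Sigma} to match the second moments of $J$ and $D$, the off-diagonal ones because $\Sigma$ is diagonal so $\int x_ix_j D(x)dx = 0$ and (after the leading terms cancel) the surviving $\int x_ix_j J$ terms cancel too. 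Finally \eqref{eq:J_2+eps} upgrades this to a quantitative Hölder-type bound: $|\hat E(k)| \lesssim |k|^{2+\eps'}$ for small $|k|$ (and more generally $|\grad^\alpha \hat E(k)| \lesssim |k|^{2+\eps' - |\alpha|}$ for $|\alpha| \le 2$), with $\eps' = \eps \wedge 1$, say.

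Next I would control the denominator. Both $1-\hat D$ and $1-\hat J$ are bounded below by $\KIR'(|k|^2 \wedge 1)$ by the infrared bounds \eqref{eq:infrared} and \eqref{eq:D_infrared}, so $1/[(1-\hat D)(1-\hat J)] \lesssim |k|^{-4}$ near $k=0$ and is bounded by a constant for $|k| \ge 1$. For the derivatives: $\hat D$ is Schwartz, so $1/(1-\hat D)$ is $C^\infty$ away from the (avoided) zero and its $\alpha$-th derivative is $O(|k|^{-2-|\alpha|})$ near $0$ and decays like a Gaussian tail times a polynomial for large $|k|$; for $1-\hat J$ the hypotheses \eqref{eq:J_hyp}--\eqref{eq:J_d-2} give $\grad^\alpha \hat J \in L^\infty \cap L^2$ for $|\alpha| \le d-2$ (the $L^2$ part from the $L^2$ moment hypotheses), but these derivatives need not be pointwise-small, so $1/(1-\hat J)$ is only \emph{weakly} differentiable of the relevant order, with $\grad^\alpha[1/(1-\hat J)] $ expressible by Faà di Bruno as a sum of products of $\grad^\beta\hat J$'s over powers of $(1-\hat J)$, each term being $O(|k|^{-2-|\alpha|})$ near $0$. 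Then $\grad^\alpha \hat f$ is, by the Leibniz rule for weak derivatives, a sum over $\alpha = \beta_1 + \beta_2 + \beta_3$ of $\grad^{\beta_1}\hat E \cdot \grad^{\beta_2}[1/(1-\hat D)]\cdot \grad^{\beta_3}[1/(1-\hat J)]$. Near $k=0$ each such term is bounded by $|k|^{(2+\eps' - (|\beta_1|\wedge 2))} \cdot |k|^{-2-|\beta_2|}\cdot|k|^{-2-|\beta_3|} = |k|^{\eps' - |\alpha| + (2 - |\beta_1|)^+ - (|\beta_1|\wedge 2)}$; a short case check (splitting on whether $|\beta_1| \le 2$) shows this exponent is always $\ge \eps' - 1 - (d-2) \cdot 0 \ldots$ — more carefully, it is at least $\eps' - |\alpha| + \min(2, ...) $, and one verifies it exceeds $-d$, so the term is locally $L^1$ near the origin in $\Rd$. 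For $|k| \ge 1$: the Gaussian-type decay of all $\grad^{\beta_2}[1/(1-\hat D)]$ factors, multiplied against the polynomially-bounded $\grad^{\beta_1}\hat E$ (a bounded function) and against $\grad^{\beta_3}[1/(1-\hat J)]$ (bounded by a sum of products of $L^2$ functions divided by a bounded-below denominator, hence in $L^1_{\mathrm{loc}}$ and with at most polynomial growth), gives an $L^1(\{|k|\ge 1\})$ bound after an application of Cauchy–Schwarz/Hölder to the products of $L^2$ derivatives of $\hat J$ — this is exactly the "far-field" issue the introduction flags, and it is handled precisely by the $L^2$-moment hypotheses in \eqref{eq:J_hyp} and \eqref{eq:J_d-2}, which guarantee that products of derivatives of $\hat J$ times the rapidly decaying $\hat D$-factors are integrable at infinity.

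The main obstacle I anticipate is the bookkeeping of the origin estimate: tracking, across all decompositions $\alpha = \beta_1+\beta_2+\beta_3$ with $|\alpha|\le d-2$, that the net power of $|k|$ near $0$ never drops to $-d$ or below. The delicate case is $|\beta_1| \le 2$ (few derivatives land on $\hat E$, so one cannot use the vanishing-to-order-$(2+\eps')$ beyond the first two derivatives), where one gets exponent $\eps' - |\alpha| + 2 - |\beta_1| - |\beta_1| \cdot 0$, wait — one gets $(2+\eps'-|\beta_1|) - (2+|\beta_2|) - (2+|\beta_3|) = \eps' - 2 - |\alpha|$ when $|\beta_1|\le 2$, which for $|\alpha| \le d-2$ is $\ge \eps' - d$, just barely integrable near the origin in $\Rd$ (since $|k|^{\eps'-d}$ is $L^1_{\mathrm{loc}}$ for $\eps' > 0$); and when $|\beta_1| > 2$ one only has boundedness of $\grad^{\beta_1}\hat E$, giving exponent $-(2+|\beta_2|)-(2+|\beta_3|) = -4 - |\alpha| + |\beta_1| \ge -4$, also safely $L^1_{\mathrm{loc}}$ in $d > 2$... but in fact when $|\beta_1| > 2$ and $|\alpha|$ is close to $d-2$, $|\beta_2|+|\beta_3|$ could still be small, so this needs the care described. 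So the role of \eqref{eq:J_2+eps} is essential exactly at the borderline $|\beta_1|\le 2$ case, and the role of the $d\le 4$ versus $d > 4$ split in Assumption~\ref{ass:J} is to supply enough moments for the top-order derivatives $\grad^\alpha\hat E$ with $|\alpha|$ up to $d-2$. Once the integrability is checked in both regimes, the Leibniz rule justifies that these weak derivatives exist and the proposition follows.
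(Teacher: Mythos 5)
Your overall architecture matches the paper's: a quotient/Leibniz expansion of $\grad^\alpha \hat f$, the vanishing of $\hat E$ and of its first and second derivatives at $k=0$ forced by the choice of $\Sigma$, the resulting bound $\abs{\grad^\gamma \hat E(k)} \lesssim \abs k^{2+\eps'-\abs\gamma}$ for $\abs\gamma \le 2$, infrared lower bounds on the denominators, and a net power count of $\abs k^{-(\abs\alpha+2-\eps')}$ near the origin, which is locally integrable precisely because $\abs\alpha \le d-2$. Two steps, however, do not close as written. First, your near-origin bookkeeping is pointwise, but for $3 \le \abs\gamma \le d-2$ the hypotheses only give $\abs x^{\abs\gamma} h(x) \in L^{p}$ with $p$ possibly larger than $1$, hence $\grad^\gamma \hat h \in L^{d/(\abs\gamma-2-\eps)}_{\mathrm{loc}}$ rather than $L^\infty$; this applies both to the derivatives of $\hat J$ appearing in the Fa\`a di Bruno expansion of $1/(1-\hat J)$ and to $\grad^{\beta_1}\hat E$. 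In particular your claim that $\grad^{\beta_1}\hat E$ is bounded when $\abs{\beta_1}>2$ is false in general, and the case analysis there is garbled. The repair is exactly what the paper does: replace pointwise exponents by local $L^p$ memberships and combine them by H\"older's inequality, so that each factor $\grad^{\gamma}\hat h/(1-\hat h)$ costs $\abs\gamma/d$ in the reciprocal exponent and the middle factor $\grad^{\alpha_2}\hat E/[(1-\hat D)(1-\hat J)]$ costs $(\abs{\alpha_2}+2-\eps')/d$, summing to $(\abs\alpha+2-\eps')/d < 1$. The arithmetic is yours; only the function-space framework changes.

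Second, and more seriously, the far-field estimate on $\{\abs k \ge 1\}$ does not work as stated. You attribute integrability at infinity to the Gaussian-type decay of the factors $\grad^{\beta_2}[1/(1-\hat D)]$, but when $\beta_2 = 0$ the factor $1/(1-\hat D)$ tends to $1$ and provides no decay; for instance with $\alpha = 0$ your bound degenerates to a product of bounded functions, which is not integrable on $\{\abs k\ge 1\}$. The actual source of integrability is the product structure of $\hat E = \hat g\hat J(1-\hat D) - \hat g(0)\hat D^2(1-\hat J)$: after expanding $\grad^{\alpha_2}\hat E$ by the product rule, every term of the full expression contains at least two factors $\grad^{\gamma}\hat h$ with $h \in \{g, J, D\}$, each lying in $L^2(\R^d)$ globally by the $L^2$ parts of \eqref{eq:J_hyp} and \eqref{eq:J_d-2}, so Cauchy--Schwarz (this is Lemma~\ref{lem:prod_h}) gives $L^1(\{\abs k \ge 1\})$ once the denominators are bounded below by constants there. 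You do invoke Cauchy--Schwarz on $L^2$ derivatives of $\hat J$, but without isolating this ``at least two $L^2$ factors from the expansion of $\hat E$'' mechanism, the argument has no source of integrability at infinity in the derivative-free case.
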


\begin{proof}[Proof of Theorem~\ref{thm:deconv} assuming Proposition~\ref{prop:f}]
By the decomposition \eqref{eq:decomp} and the decay of $C(x)$ given in Lemma~\ref{lem:C}, 
it suffices to prove that $f(x) = o(\abs x^{-(d-2)})$ as $\abs x \to \infty$. 
By Proposition~\ref{prop:f},
$\grad^\alpha \hat f \in L^1(\Rd)$
for all multi-index $\alpha$ with $\abs \alpha = d-2$.
Since the inverse Fourier transform of $\grad^\alpha \hat f$
is a constant multiple of $x^\alpha f(x)$, the Riemann--Lebesgue lemma implies that $x^{\alpha} f(x) \to 0 $ as $\abs x \to \infty$. 
But this holds for all $\abs \alpha = d-2$, so we get  $ \abs x^{d-2} f(x) \to 0 $, which concludes the proof. 
\end{proof}

The rest of the paper is organised as follows.
In Section~\ref{sec:regularity}, 
we prove elementary regularity estimates using basic Fourier analysis and H\"older's inequality. 
In Section~\ref{sec:pf}, 
we apply these estimates to prove Proposition~\ref{prop:f}, 
following the strategy of \cite{LS24a}.
This finishes the proof of Theorem~\ref{thm:deconv}.
In Section~\ref{sec:BM}, we prove Theorem~\ref{thm:BM} using Theorem~\ref{thm:deconv} and results of \cite{BKM24}.

\section{Regularity estimates}
\label{sec:regularity}

We prove three lemmas in this section. They will be applied to $h=J,g,D, \text{or }E$. 
The first lemma gives a way to interpolate moments of a function. 
It could also be used to verify equation \eqref{eq:J_d-2} of Assumption~\ref{ass:J}, by choosing $b = d-2$.

\begin{lemma}
\label{lem:interp}
Let $d\ge 1$.
Let $h : \R^d \to \C$ be a function such that
\begin{equation} \label{eq:ass_h}
h(x) \in L^1 \cap L^2(\R^d), 
\qquad
\abs x^{2} h(x) \in L^1(\R^d) .
\end{equation}
Let $ 2 \le b \le a \le d+2$ and write $p_a^* = \frac d {d - a + 2} \in [1,\infty]$. Then
\begin{equation} \label{eq:h_hyp}
\abs x^a h(x) \in L^{p_a} \cap L^2(\R^d)
\qquad
\text{for some $1 \le p_a \le p_a^*$} 
\end{equation}
implies
\begin{equation} \label{eq:h_conc}
\abs x^b h(x) \in L^{p_b} \cap L^2(\R^d)
\qquad
\text{for some $1 \le p_b \le p_b^*$} . 
\end{equation}
Moreover, if $b > 2$ and \eqref{eq:h_hyp} holds with some $p_a < p_a^*$, then \eqref{eq:h_conc} holds with some $p_b < p_b^*$. 
\end{lemma}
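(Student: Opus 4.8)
The strategy is to split $\R^d$ into the unit ball $B_1$ and its complement, and handle each piece separately using H\"older's inequality; the exponent bookkeeping is the only real content.

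First I would dispose of the $L^2$ conclusion. On $B_1$ we have $|x|^b \le 1$, so $\abs x^b h(x) \in L^2(B_1)$ follows from $h \in L^2(\R^d)$. On $B_1^c$ we have $|x|^b \le |x|^a$ since $b \le a$, so $\abs x^b h(x) \in L^2(B_1^c)$ follows from $\abs x^a h(x) \in L^2(\R^d)$ (hypothesis \eqref{eq:h_hyp}). Hence $\abs x^b h(x) \in L^2(\R^d)$ unconditionally, and it remains to produce an exponent $p_b \in [1, p_b^*]$ with $\abs x^b h(x) \in L^{p_b}(\R^d)$.

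Next, the far-field piece. On $B_1^c$, again $|x|^b \le |x|^a$, so $\abs x^b h(x) \in L^{p_a}(B_1^c)$, and since $B_1^c$ has infinite measure this does \emph{not} immediately give smaller exponents; but since $\abs x^b h \in L^2(B_1^c)$ as well, interpolation between $L^{p_a}$ and $L^2$ on $B_1^c$ gives $\abs x^b h \in L^q(B_1^c)$ for every $q$ between $\min(p_a,2)$ and $\max(p_a,2)$. For the near-field piece on $B_1$, I interpolate the integrability of $\abs x^b h$ between two endpoints: from $h \in L^2(B_1)$ (giving $\abs x^b h \in L^2(B_1)$ as above since $|x|^b \le 1$) and from $\abs x^2 h \in L^1(B_1)$ together with $b \ge 2$ (giving, via $|x|^b \le |x|^2$ on $B_1$, that $\abs x^b h \in L^1(B_1)$). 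So $\abs x^b h \in L^r(B_1)$ for all $r \in [1,2]$. Combining: since $B_1$ has finite measure, membership in $L^r(B_1)$ for large $r$ implies it for smaller $r$, so the binding constraint is only on $B_1^c$. Thus $\abs x^b h \in L^q(\R^d)$ for every $q$ in the range where both pieces work, namely $q \in [\min(p_a,2), \, \max(p_a,2)] \cap [1, \infty)$ — and in particular we may take $q$ as small as $\min(p_a, 2)$.

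Finally I must check this delivers some $p_b \le p_b^* = \frac{d}{d-b+2}$. Since $b \le a$ we have $p_b^* \le p_a^*$, which is the wrong direction, so I cannot simply reuse $p_a$. Instead I use that $p_b^* \ge \frac{d}{d-2+2} = 1$ always (as $b \ge 2$), so it suffices to exhibit $\abs x^b h \in L^1(\R^d)$: take $q = 1$, which lies in the allowed range $[\min(p_a,2),\max(p_a,2)]$ precisely when $p_a \le 1$ is false in general — wait, $p_a \ge 1$, so $\min(p_a, 2) \ge 1$, and $q=1$ is allowed only if $p_a = 1$. This is the subtle point: when $p_a > 1$ I get $\abs x^b h \in L^{p_a}(\R^d)$ at best, and I need $p_a \le p_b^*$. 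Here I invoke $p_a \le p_a^* = \frac{d}{d-a+2}$ and must compare with $p_b^* = \frac{d}{d-b+2}$; since $b \le a$, $p_b^* \le p_a^*$, so $p_a \le p_a^*$ does not give $p_a \le p_b^*$. The resolution is that I should not keep exponent $p_a$ on $B_1^c$ but \emph{lower} it: the interpolation on $B_1^c$ between $L^{p_a}$ and $L^2$ lets me reach any $q \in [\min(p_a,2), \max(p_a,2)]$, and I need some such $q \le p_b^*$. If $p_a \le 2$ then $q = p_a$ works iff $p_a \le p_b^*$; if not, I instead observe that one can trade: the hypothesis gives $p_a \le p_a^*$ with strict inequality allowed, and a direct computation (using $|x|^{b} = |x|^{a} \cdot |x|^{b-a}$ with $|x|^{b-a} \le 1$ on $B_1^c$ and H\"older with a Gaussian-free weight) shows $\abs x^b h \in L^{p_b}(B_1^c)$ for $p_b$ determined by $\frac{1}{p_b} = \frac{1}{p_a} + \frac{(a-b)}{d}\cdot(\text{something})$ — the honest route is: since $\abs x^{a} h \in L^{p_a}$ and $h = \abs x^{-a}(\abs x^a h)$, write $\abs x^b h = \abs x^{b-a} (\abs x^a h)$ and apply H\"older on $B_1^c$ with exponents $\frac{p_a}{p_b}$ and its conjugate to the pair $\abs x^a h$ and $\abs x^{b-a}$, requiring $\abs x^{(b-a)\cdot(p_a p_b/(p_a - p_b))} \in L^1(B_1^c)$, i.e. $(a-b)\frac{p_a p_b}{p_a - p_b} > d$. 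Choosing $p_b = p_b^*$ makes this inequality reduce, after simplification using the definitions of $p_a^*, p_b^*$, exactly to $p_a \le p_a^*$, with strict inequality transferring to $(a-b)\frac{p_ap_b}{p_a-p_b} > d$ strictly hence to $p_b < p_b^*$ being achievable.

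\textbf{The main obstacle.} The only delicate part is this last exponent computation on $B_1^c$: verifying that the H\"older condition $(a-b)\frac{p_a p_{b}^{*}}{p_a - p_{b}^{*}} \ge d$ is equivalent to the hypothesis $p_a \le p_a^*$ (so that it is automatically satisfied), and that strictness propagates to give the "moreover" clause. This is a routine but error-prone manipulation of the formulas $p_a^* = \frac{d}{d-a+2}$, $p_b^* = \frac{d}{d-b+2}$; I would carry it out carefully, treating the endpoint cases $p_a^* = \infty$ (i.e. $a = d+2$) and $p_a = 1$ separately. Everything else — the $L^2$ claim, the $B_1$ estimates, and assembling the two pieces — is immediate from $b \le a$, $b \ge 2$, and finiteness of $|B_1|$.
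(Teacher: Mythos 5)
Your route differs from the paper's: you split $\R^d$ into the unit ball $B_1$ and its complement and, on the far field, apply H\"older to the pair $\abs x^a h$ and the pure power weight $\abs x^{b-a}$. The paper instead interpolates globally (no domain splitting) between the two \emph{functions} $\abs x^2 h \in L^1$ and $\abs x^a h\in L^{p_a}$ via the pointwise identity $\abs{\abs x^b h} = \big(\abs{\abs x^{2} h}\big)^{\frac{a-b}{a-2}}\big(\abs{\abs x^{a} h}\big)^{\frac{b-2}{a-2}}$, which yields $\frac 1{p_b} = \frac{a-b}{a-2} + \frac{b-2}{(a-2)p_a}$ with no loss. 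Your $L^2$ claim and near-field estimates are fine.

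The genuine gap is at the endpoint $p_a = p_a^*$, which the hypothesis \eqref{eq:h_hyp} explicitly allows. Your far-field H\"older needs $\abs x^{b-a} \in L^{s}(B_1^c)$ with $s = \frac{p_a p_b}{p_a - p_b}$, i.e.\ the \emph{strict} inequality $(a-b)s > d$, since $\int_{\abs x>1}\abs x^{-d}\,dx = \infty$. Setting $p_b = p_b^*$, this condition simplifies to $\frac1{p_a} > \frac1{p_a^*}$, that is $p_a < p_a^*$ --- not ``$p_a \le p_a^*$'' as you assert. Moreover $s$ is increasing in $p_b$, so lowering $p_b$ below $p_b^*$ only makes the condition harder; hence when $p_a = p_a^*$ and $2 < b < a$ your argument produces no admissible $p_b$ at all. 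Your fallback of interpolating between $L^{p_a}(B_1^c)$ and $L^2(B_1^c)$ cannot reach exponents below $\min(p_a,2)$, as you yourself note, and on $B_1^c$ you cannot invoke $\abs x^2 h \in L^1$ directly because $\abs x^b \ge \abs x^2$ there. The repair is precisely the paper's interpolation identity above: it replaces the non-integrable power weight by the integrable function $\abs x^2 h$, so the conclusion $p_b \le p_b^*$ holds up to and including the endpoint, and the strictness transfer for the ``moreover'' clause (when $b>2$ and $p_a < p_a^*$) then follows from the same formula.
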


\begin{proof}
Let $b \in [2, a]$. 
Since $h(x)$ and $\abs x^a h(x)$ are both in $L^2$ by the hypotheses, we get $\abs x^b h(x) \in L^2$.
Also, using
$\abs x^2 h(x) \in L^1$, $\abs x^a h(x) \in L^{p_a}$,
and the decomposition 
\begin{equation}
\abs x^b h(x) =  \big( \abs x^{ 2 } h(x) \big)^{ \frac{a-b}{a-2} } 
	\big( \abs x^{ a } h(x) \big)^{ \frac{b-2}{a-2} } ,
\end{equation}
it follows from H\"older's inequality that
\begin{align} \label{eq:xbh}
\abs x^b h(x) \in L^{p_b}(\R^d) 
\quad \text{with} \quad
\frac 1 {p_b} = \frac{ a-b }{a-2} + \frac { b-2 }{ (a-2) p_a } .
\end{align}
A short computation 
using $(p_a^*)\inv = 1 - \frac{a-2}d$
then shows 
that $p_b \le p_b^*$ when $p_a \le p_a^*$, 
and that $p_b < p_b^*$ when $b > 2$ and $p_a < p_a^*$. 
This concludes the proof. 
\end{proof}

The next two lemmas estimate regularity of the Fourier transform. 
Lemma~\ref{lem:h} handles local behaviour and Lemma~\ref{lem:prod_h} handles global behaviour.
We use local $L^p$ spaces. 
The space $L^p\loc(\Rd)$ consists of all measurable functions $u : \Rd \to \C$ such that $\left. u \right\rvert_K  \in L^p(K)$ for all compact subsets $K \subset \Rd$. 
These spaces are nested, in the sense that 
$L^p\loc \subset L^q\loc$ when $p \ge q$.

\begin{lemma} 
\label{lem:h}
Let $d\ge 1$, $a \in [2, d+2]$, and
$h: \R^d \to \C$ be a function such that \eqref{eq:ass_h}--\eqref{eq:h_hyp} hold. 
Then 
\begin{enumerate}[label=(\roman*)]
\item
The Fourier transform $\hat h$
is $\floor a$ times weakly differentiable on $\R^d$, 
and the derivatives satisfy
\begin{alignat}{2} 
\grad^\gamma \hat h &\in L^2 \cap L^\infty(\Rd)
	\qquad &&( 0 \le \abs \gamma \le 2 ), 
	\label{eq:h_gamma_2} \\
\grad^\gamma \hat h &\in L^2 \cap L^{ \frac d {\abs \gamma - 2} }\loc(\Rd)
	\qquad &&(3 \le \abs \gamma \le a ). 	
	\label{eq:h_gamma_3}
\end{alignat}
Moreover, if $ p_a  <  p_a^*$ in \eqref{eq:h_hyp}, there exists $\eps \in (0,1]$ such that
\begin{alignat}{2} 
\grad^\gamma \hat h &\in L^{ \frac d {\abs \gamma - 2 - \eps} }\loc(\Rd)
	\qquad~\quad &&(3 \le \abs \gamma \le a ) . 	
	\label{eq:h_gamma_3+}
\end{alignat}

\item
Additionally, 
if $h(x)$ is an even function and satisfies an infrared bound
\begin{align} 
\hat h(0) \le 1,
\qquad
\hat h(0) - \hat h(k) 
\ge K_{ {\rm IR}, h} (\abs k^2 \wedge 1)
	\qquad (k\in \R^d) ,
\end{align}
then for any multi-index $\gamma$ with $1 \le \abs \gamma \le a$, we have
\begin{align}
\frac{ \grad^\gamma \hat h }{ 1 - \hat h } \in L^q\loc(\R^d) 
	\qquad ( q\inv > \frac{ \abs \gamma } d ).
\end{align}

\end{enumerate}
\end{lemma}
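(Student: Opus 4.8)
The plan is to prove part (i) by the standard duality between polynomial multiplication in physical space and weak differentiation in Fourier space, keeping careful track of which $L^p\loc$ spaces the derivatives land in. First I would observe that under \eqref{eq:ass_h}--\eqref{eq:h_hyp} and Lemma~\ref{lem:interp}, we have $\abs x^b h(x) \in L^{p_b} \cap L^2(\Rd)$ for every integer $2 \le b \le a$ with $p_b \le p_b^* = \frac{d}{d-b+2}$, and also $h(x), \abs x\, h(x) \in L^1 \cap L^2$ (the low moments follow by interpolating between $h \in L^1$ and $\abs x^2 h \in L^1$, or simply from $L^1$-boundedness on the unit ball). The key mechanism is: if $x^\gamma h(x) \in L^1(\Rd)$ then $\hat h$ has a classical continuous derivative $\grad^\gamma \hat h$ which is bounded, and more generally the weak derivative $\grad^\gamma \hat h$ is (a constant multiple of) the Fourier transform of $x^\gamma h(x)$, so membership of $x^\gamma h$ in $L^q$ controls $\grad^\gamma \hat h$ via Hausdorff--Young: $x^\gamma h \in L^p$ with $1 \le p \le 2$ gives $\grad^\gamma \hat h \in L^{p'}$ globally, where $p' = p/(p-1)$. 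For $\abs\gamma \le 2$ we have $x^\gamma h \in L^1$ giving $\grad^\gamma \hat h \in L^\infty$, and $x^\gamma h \in L^2$ giving $\grad^\gamma \hat h \in L^2$, which is \eqref{eq:h_gamma_2}. For $3 \le \abs\gamma \le a$, I would bound $\abs{x^\gamma} \le \abs x^{\abs\gamma}$ (up to constants) so $x^\gamma h \in L^{p_{\abs\gamma}} \cap L^2$; then Hausdorff--Young gives $\grad^\gamma \hat h \in L^{p_{\abs\gamma}'} \cap L^2$. Since $p_{\abs\gamma} \le p_{\abs\gamma}^* = \frac{d}{d-\abs\gamma+2}$, its conjugate satisfies $p_{\abs\gamma}' \ge \frac{d}{\abs\gamma-2}$, and because $L^r\loc \subset L^s\loc$ for $r \ge s$, this yields $\grad^\gamma \hat h \in L^{\frac{d}{\abs\gamma-2}}\loc$, which is \eqref{eq:h_gamma_3}. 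The refinement \eqref{eq:h_gamma_3+} is immediate: if $p_a < p_a^*$ then by the last sentence of Lemma~\ref{lem:interp} we get $p_b < p_b^*$ strictly for all $2 < b \le a$, hence $p_{\abs\gamma}' > \frac{d}{\abs\gamma-2}$ strictly, so there is $\eps \in (0,1]$ with $p_{\abs\gamma}' \ge \frac{d}{\abs\gamma - 2 - \eps}$.

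For part (ii), the plan is to combine the local integrability of the numerator $\grad^\gamma \hat h$ from part (i) with the lower bound on the denominator coming from the infrared bound. The infrared bound gives $1 - \hat h(k) \ge K_{\mathrm{IR},h}(\abs k^2 \wedge 1) \gtrsim \abs k^2$ on any fixed ball, so $\abs{1-\hat h(k)}\inv \lesssim \abs k^{-2}$ there; away from $k=0$, $1-\hat h$ is bounded below by a positive constant on compact sets (using $\hat h$ continuous, $\hat h(0) \le 1$, and the infrared bound which forces $\hat h(k) < 1$ for $k \ne 0$). Then on a fixed compact $K$ I would apply Hölder's inequality to the product $\frac{\grad^\gamma \hat h}{1-\hat h} = \grad^\gamma \hat h \cdot \frac{1}{1-\hat h}$: the first factor lies in $L^{r}(K)$ for $r$ up to $\frac{d}{\abs\gamma - 2}$ (from \eqref{eq:h_gamma_3}, or $L^\infty$ when $\abs\gamma \le 2$), and the second factor, being $\lesssim \abs k^{-2}$, lies in $L^{s}(K)$ for every $s < d/2$. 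Choosing the Hölder exponents $\frac1q = \frac1r + \frac1s$ and optimizing, one can reach any $q$ with $\frac1q > \frac{\abs\gamma - 2}{d} + \frac{2}{d} = \frac{\abs\gamma}{d}$, which is exactly the claimed range $q\inv > \frac{\abs\gamma}{d}$.

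The main obstacle is not any single deep step but rather the bookkeeping in part (i): making precise the claim that ``$\grad^\gamma \hat h$ equals the Fourier transform of $x^\gamma h$ in the weak sense'' when $h$ is only in $L^1 \cap L^2$ with moment conditions, and in particular justifying that $\hat h$ is genuinely $\floor a$ times \emph{weakly} differentiable (not just that the candidate derivatives are well-defined $L^p\loc$ functions). This is handled by the standard approximation argument: for $\abs\gamma \le 2$ the derivatives are classical since $x^\gamma h \in L^1$; for higher $\gamma$ one integrates against a test function $\phi \in C_c^\infty(\Rd)$, moves derivatives onto $\phi$ using Fubini (legitimate because $x^\gamma h \in L^1(\supp\hat\phi$-weighted$)$... more carefully because the relevant integrals converge absolutely on the compact support of $\phi$), and identifies the result. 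One must also verify the chain of weak derivatives is consistent — i.e. that $\grad^\gamma \hat h$ for $\abs\gamma = j$ is the weak gradient of the $\grad^{\gamma'}\hat h$ with $\abs{\gamma'} = j-1$ — which again follows from the same integration-by-parts identity applied inductively. A secondary technical point in part (ii) is the lower bound $\abs{1-\hat h(k)} \gtrsim \abs k^2$ on all of a fixed ball including the behaviour near the boundary where $\abs k \approx 1$; this is where the $\abs k^2 \wedge 1$ form of the infrared bound is used, together with continuity of $\hat h$ to rule out other zeros of $1 - \hat h$ on the compact set.
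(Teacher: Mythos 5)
Your part (i) follows the paper's proof essentially verbatim: interpolate the moments via Lemma~\ref{lem:interp}, identify $\grad^\gamma \hat h$ with the Fourier transform of $(ix)^\gamma h(x)$, and apply Hausdorff--Young together with the nesting of local $L^q$ spaces. That part is fine (the paper writes $L^{q_b \vee 2}$ to cover the case $p_b > 2$, where Hausdorff--Young is unavailable, but the $L^2$ information already suffices there, so this is cosmetic).

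Part (ii), however, has a genuine gap at $\abs\gamma = 1$. Your H\"older bookkeeping assigns the numerator to ``$L^{d/(\abs\gamma-2)}\loc$'', which for $\abs\gamma=1$ is meaningless; the only space part (i) actually provides in that case is $L^\infty$, and pairing $L^\infty$ with $\abs k^{-2}\in L^s\loc$ for $s<d/2$ yields only $q\inv > 2/d$, strictly weaker than the claimed $q\inv>1/d$ (the range $1/d< q\inv\le 2/d$ is not covered). The missing ingredient is the evenness of $h$, which your argument never invokes even though it is an explicit hypothesis of part (ii): evenness forces $\grad_i\hat h(0)=0$, and the quantitative Taylor bound $\abs{\grad_i\hat h(k)}=\bigabs{\int x_i h(x)\sin(k\cdot x)\,dx}\le \abs k\,\bignorm{\abs x^2 h}_1$ then cancels one power of the $\abs k^{-2}$ singularity, giving $\abs{\grad_i\hat h(k)}/\abs{1-\hat h(k)}\lesssim \abs k\inv$ on the unit ball, which lies in $L^q$ precisely for $q\inv>1/d$. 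This is not a presentational shortcut: without evenness $\grad_1\hat h(0)$ can be nonzero, the quotient then behaves like $\abs k^{-2}$ near the origin, and the stated conclusion fails for $q\inv\le 2/d$. This is exactly why the paper's proof treats $\abs\gamma=1$ as a separate case from $\abs\gamma\in[2,a]$, and the full range $q\inv>1/d$ is needed downstream in the proof of Proposition~\ref{prop:f}, where factors with $\abs{\delta_l}=\abs{\gamma_i}=1$ must each contribute only $1/d$ to the H\"older budget.
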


\begin{proof}
(i) Let $\gamma$ be a multi-index
and let $b = \abs \gamma \le a$. 
Since $\abs x^b h(x) \in L^2$ by \eqref{eq:ass_h}--\eqref{eq:h_hyp},
the derivative $\grad^\gamma \hat h$ exists weakly (see \cite[Lemma~A.4]{LS24a} which also works on $\Rd$) and belongs to $L^2$. 

When $b \le 2$, 
\eqref{eq:ass_h} also implies $\abs x^b h(x) \in L^1$,
so $\grad^\gamma \hat h$ exists classically and belongs to $L^\infty$.

When $b > 2$, 
using the $L^p$ space for $\abs x^b h(x)$ computed in \eqref{eq:xbh},
using boundedness of the $L^p$ Fourier transform ($1 \le p \le 2$),
and using $p_a\inv \ge (p_a^*)\inv = 1 - \frac{a-2}d$,
we have
\begin{equation} \label{eq:h_gamma_proof}
\grad^\gamma \hat h \in L^2 \cap L^{q_b \vee 2}(\R^d) ,
\quad \text{where} \quad
\frac 1 {q_b} = 1 - \frac 1 {p_b}
	= \frac { b-2 }{ a-2 } \Big( 1 - \frac 1 {p_a} \Bigr)
	\le \frac{ b-2}d. 
\end{equation}
Since $L^q\loc$ spaces are nested, the claim \eqref{eq:h_gamma_3} follows.

Finally, if in \eqref{eq:h_hyp} we have $ p_a  < p_a^*$,
then the last inequality of \eqref{eq:h_gamma_proof} is strict,
so we can pick a small $\eps>0$ such that $\frac{ b - 2 - \eps }d > q_b\inv$ for all integer $b \in [3, a]$
to get \eqref{eq:h_gamma_3+}.

\smallskip \noindent 
(ii) 
Let $\B = \{ k \in \Rd : \abs k < 1 \}$. 
By the infrared bound, we have
\begin{align} \label{eq:h_loc_pf}
\biggabs{ \frac{ \grad^\gamma \hat h }{ 1 - \hat h } (k) }
\lesssim  \frac{ \abs{ \grad^\gamma \hat h (k)} }{ \abs k^2 } \1_\B(k) 
	+ \abs{ \grad^\gamma \hat h (k)} .
\end{align}
Since $L^q\loc$ spaces are nested, by part (i) the second term on the right-hand side is in $L^q\loc$ for all $q\inv \ge \abs \gamma / d$.
Therefore, we only need to estimate the first term. 

If $\abs \gamma = 1$, 
we use Taylor's theorem and evenness of $h(x)$ to bound 
$\abs{ \grad^\gamma \hat h(k) } \lesssim \abs k \bignorm{ \abs x^2 h(x) }_1 \lesssim \abs k$. 
Then the first term on the right-hand side of \eqref{eq:h_loc_pf} is bounded by a multiple of $\abs k \inv \1_\B$, which is in $L^q$ for all $q\inv > 1/d$, as desired.

If $\abs \gamma \in [2, a]$, 
we use H\"older's inequality,
that $\grad^\gamma \hat h \in L^{d/ (\abs \gamma - 2)}\loc$ from part (i),
and that $\abs k^{-2} \in L^p\loc$ for $p\inv > 2/d$, 
to get that their product is in $L^q\loc$ with $q\inv > (\abs \gamma - 2 + 2)/d = \abs \gamma / d$, as desired.
\end{proof}

\begin{lemma} \label{lem:prod_h}
Let $d\ge 1$, $a \in [2, d+2]$,
$n \ge 2$, and $\gamma_i$ be multi-indices such that $\sum_{i=1}^n \abs{ \gamma_i } \le a$. 
Let $h_i : \Rd \to \C$ be functions for each of which \eqref{eq:ass_h}--\eqref{eq:h_hyp} hold. 
Then
\begin{align} \label{eq:prod_h}
\prod_{i=1}^n \grad^{\gamma_i} \hat h_i    \in L^r(\R^d) 
\quad \text{for} \quad
\Big( \frac{ -2 + \sum_{i=1}^n \abs{ \gamma_i }  } d \Big) \vee 0
\le \frac 1 r \le 1 .
\end{align}
\end{lemma}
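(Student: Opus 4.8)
The plan is to prove the statement by induction on $n$, using Hölder's inequality to combine the factors, where the $L^p$-membership of each individual factor $\grad^{\gamma_i}\hat h_i$ is supplied by Lemma~\ref{lem:h}(i). The key bookkeeping device is to track the ``exponent deficit'' $\frac1{r_i} := 1 - \frac1{p_i}$ of each factor, where $\grad^{\gamma_i}\hat h_i \in L^{p_i}$ with $\frac1{p_i} \ge (1 - \frac{(\abs{\gamma_i}-2)_+}{d})$; equivalently, $\frac1{r_i} \le \frac{(\abs{\gamma_i}-2)_+}{d}$ for each $i$, where $(t)_+ = t\vee 0$. Since Hölder's inequality gives $\prod_i u_i \in L^r$ whenever $\frac1r = \sum_i \frac1{r_i}$ with each $u_i\in L^{r_i}$, and since all the relevant local $L^p$ spaces are nested on bounded sets while the global $L^2$-membership handles the tails, the exponents should add up correctly.

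Concretely, I would first dispose of the global integrability at infinity: each $\grad^{\gamma_i}\hat h_i$ is in $L^2(\Rd)$ by Lemma~\ref{lem:h}(i), so by the generalized Hölder inequality the product lies in $L^{2/n}(\Rd) \subset L^1(\Rd)$ outside any fixed ball, giving the endpoint $\frac1r = 1$ (and, by interpolating with what follows, everything up to it). For the behaviour on a bounded set $K$, I would split the index set into $S = \{i : \abs{\gamma_i}\le 2\}$ and its complement. For $i\in S$, Lemma~\ref{lem:h}(i) gives $\grad^{\gamma_i}\hat h_i\in L^\infty$, so these factors contribute nothing to the exponent sum. For $i\notin S$, Lemma~\ref{lem:h}(i) gives $\grad^{\gamma_i}\hat h_i \in L^{d/(\abs{\gamma_i}-2)}\loc$, i.e. deficit exactly $\frac{\abs{\gamma_i}-2}{d}$. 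Applying Hölder on $K$ to the factors with $i\notin S$ (absorbing the bounded factors) yields $\prod_i \grad^{\gamma_i}\hat h_i \in L^{r_0}(K)$ with $\frac1{r_0} = \sum_{i\notin S}\frac{\abs{\gamma_i}-2}{d}$. It remains only to check that $\sum_{i\notin S}(\abs{\gamma_i}-2) \le -2 + \sum_{i=1}^n\abs{\gamma_i}$, i.e. that $2(\abs S - 1) \le \sum_{i\in S}\abs{\gamma_i}$; since $n\ge 2$, if $S\neq\varnothing$ and $S\ne\{1,\dots,n\}$ this is immediate from $\abs{\gamma_i}\le 2$, and the edge cases $S=\varnothing$ and $S$ = everything are handled directly (in the latter case the product is in $L^\infty\loc$ and there is nothing to prove since the left endpoint of the $\frac1r$-interval is $0$). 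Then nestedness of $L^q\loc$ spaces and interpolation between $L^{r_0}(K)$ and $L^{2/n}$ on the tail give the full stated range of $\frac1r$.

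The main obstacle I anticipate is not any single estimate but the careful case analysis needed to confirm that the exponents always add up to at most the claimed value, in particular making sure the ``$-2$'' in $\frac{-2+\sum\abs{\gamma_i}}{d}$ is exactly accounted for — it comes from a single factor with $\abs{\gamma_i}\ge 3$ ``donating'' a $\frac{-2}{d}$, and one must verify that when there are several factors with $\abs{\gamma_i}\ge 3$ we are not overcounting, and when there are none (all $\abs{\gamma_i}\le 2$) the bound is vacuous because the left endpoint of the interval is $0$ and the product is bounded. A secondary, minor point is to be careful that the lemma is stated for $L^r(\Rd)$ (global), so one genuinely needs the $L^2$-tail argument glued to the local estimate, rather than just a local conclusion; splitting $\Rd = K \sqcup K^c$ and using $\prod\in L^{r_0}(K)\cap L^{2/n}(K^c)$ together with $r_0\le 2/n$ (which holds because $\frac1{r_0}\le \frac{\sum\abs{\gamma_i}-2}{d}\le \frac{d+2-2}{d}=1$... here one should instead just observe $L^{r_0}(K)\cap L^{s}(K^c)\subset L^{\min(r_0,s)}(\Rd)$ and interpolate upward to cover $[\,(\tfrac{-2+\sum\abs{\gamma_i}}{d})\vee 0,\,1\,]$) finishes the proof.
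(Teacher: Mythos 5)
Your local bookkeeping is essentially the paper's: factors with $\abs{\gamma_i}\le 2$ cost nothing (they are in $L^\infty$), each factor with $\abs{\gamma_i}\ge 3$ costs $(\abs{\gamma_i}-2)/d$, and the total cost is at most $\big(\frac{-2+\sum_i\abs{\gamma_i}}{d}\big)\vee 0$. (Your intermediate reduction to ``$2(\abs S-1)\le\sum_{i\in S}\abs{\gamma_i}$'' is mis-stated and can fail, e.g.\ when $\abs S=2$ and both those $\gamma_i$ are empty; the inequality actually needed is $2(\abs S-n+1)\le\sum_{i\in S}\abs{\gamma_i}$, which holds trivially whenever some $\abs{\gamma_i}\ge 3$, so the conclusion survives.) The genuine gap is in the tail. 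From ``each factor is in $L^2$'', H\"older gives the product only in $L^{2/n}$, and for $n\ge 3$ we have $2/n<1$ and $L^{2/n}(K^c)\not\subset L^1(K^c)$, since $K^c$ has infinite measure. Nor can you ``interpolate upward'' from the single membership $\prod_i\grad^{\gamma_i}\hat h_i\in L^{2/n}(K^c)$: interpolation needs membership in two spaces. So your argument does not reach the endpoint $1/r=1$ whenever the product has three or more factors --- and that endpoint, for products with many factors, is exactly what the proof of Proposition~\ref{prop:f} uses.

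The repair --- and the paper's actual route --- is to drop the local/global split and work with global exponents throughout. The proof of Lemma~\ref{lem:h}(i) (the Hausdorff--Young step, \eqref{eq:h_gamma_proof}) gives the \emph{global} membership $\grad^{\gamma_i}\hat h_i\in L^2\cap L^{q_i}(\Rd)$ with $q_i\ge 2$ and $1/q_i\le\big((\abs{\gamma_i}-2)\vee 0\big)/d$; interpolating each factor between $L^2$ and $L^{q_i}$ and applying H\"older once on all of $\Rd$, the product lies in $L^r(\Rd)$ for every $1/r\in[\sum_i 1/q_i,\,n/2]$, an interval containing $\big[\big(\frac{-2+\sum_i\abs{\gamma_i}}{d}\big)\vee 0,\,1\big]$ because $\sum_i 1/q_i\le\frac{a-2}{d}\le 1\le n/2$. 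The point is that the statement of Lemma~\ref{lem:h}(i) records only \emph{local} $L^q$ information for $\abs{\gamma_i}\ge 3$, which is not enough to control the tail; you must reach back to the global $L^{q_i}(\Rd)$ bound from its proof.
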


\begin{proof}
Since each $\grad^{\gamma_i} \hat h_i \in L^2$ by Lemma~\ref{lem:h}(i), the product $\prod_{i=1}^n \grad^{\gamma_i} \hat h_i$ belongs to $L^r$ where $r\inv = n/2 \ge 1$ by H\"older's inequality. 
We choose different $L^q$ spaces for the derivatives, to decrease $r\inv$ to the desired values. 
For the allowed range of $q$, 
we note that when $\abs{\gamma_i} \le 2$
we have $\grad^{\gamma_i} \hat h_i \in L^2 \cap L^\infty$. 
And when $a \ge 3$ and $\abs{ \gamma_i } \in [3, a]$, $\grad^{\gamma_i} \hat h_i$ belongs to the $L^q$ spaces identified in \eqref{eq:h_gamma_proof} with $b = \abs{ \gamma_i }$. 
Therefore, by taking larger $q$,
it is possible to decrease $r\inv$ from $n/2$ to $m$ where
\begin{align}
m \le 
\sum_{i: \abs{ \gamma_i } \le 2} \frac { 1 }{ \infty }  +
\sum_{i: \abs{ \gamma_i } \ge 3} \frac { \abs{ \gamma_i } - 2 }{ d }
\le \Big( \frac{ -2 + \sum_{i=1}^n \abs{ \gamma_i }  } d \Big) \vee 0 .
\end{align}
In particular, we can get the desired values of $r\inv$. 
\end{proof}

\section{Proof of Proposition~\ref{prop:f}}
\label{sec:pf}

We now prove Proposition~\ref{prop:f} following the strategy of \cite{LS24a}.
We want to estimate $d-2$ derivatives of the function
$\hat f = \frac{ \hat E }{ (1 - \hat D) ( 1 - \hat J) }$ defined in \eqref{eq:def_E}. 
Let $\alpha$ be a multi-index with $\abs \alpha \le d-2$.
Using the product and quotient rules of weak derivatives \cite[Appendix~A]{LS24a}, 
to estimate $\grad^\alpha \hat f$
it suffices to estimate all terms of the form
\begin{equation} \label{eq:f_alpha_decomp_0}
\left( \prod_{l=1}^m \frac{  - \grad^{\delta_ l} \hat D  }{ 1 - \hat D } \right)
\frac{ \grad^{\alpha_2} \hat E }{ (1-\hat D)(1-\hat J) }
\left( \prod_{i=1}^n \frac{  - \grad^{\gamma_ i} \hat J  }{ 1 - \hat J } \right),
\end{equation}
where 
$\alpha = \alpha_1 + \alpha_2 + \alpha_3$, 
$0 \le m \le \abs{\alpha_1}$, 
$0 \le n \le \abs{\alpha_3}$,
$\alpha_1 = \sum_{l=1}^m \delta_l $, 
$\alpha_3 = \sum_{i=1}^n \gamma_i$,
and $\abs{\delta_l} \ge 1$, $\abs {\gamma_i} \ge 1$.
Once we show that \eqref{eq:f_alpha_decomp_0} is locally integrable, 
it will follow that $\hat f$ is $d-2$ times weakly differentiable, 
and that $\grad^\alpha \hat f$ is given by a linear combination of \eqref{eq:f_alpha_decomp_0}. 
Thereafter, we show $\eqref{eq:f_alpha_decomp_0} \in L^1(\Rd)$ to conclude the proposition. 

Equation \eqref{eq:f_alpha_decomp_0} has been factored so that we can apply Lemma~\ref{lem:h}(ii), with $h=D\text{ or } J$, to the first and the last term. 
The middle term of \eqref{eq:f_alpha_decomp_0} is estimated by
the following lemma; it crucially relies on the choice of the matrix $\Sigma$ in \eqref{eq:def_Sigma}.

\begin{lemma} \label{lem:E}
Let $d > 2$
and let $J,g$ obey Assumption~\ref{ass:J}. 
Define $\Sigma$ by \eqref{eq:def_Sigma}.
Then there exists $\eps' \in (0, 1)$ such that
\begin{equation} \label{eq:EAF}
\frac{ \grad^{\gamma} \hat E }{ (1-\hat D)(1-\hat J) }
	\in L^{ \frac d {\abs \gamma + 2 - \eps'} }\loc(\Rd)
	\qquad (\abs \gamma \le d-2).
\end{equation}
\end{lemma}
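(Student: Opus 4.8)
The plan is to control the local $L^p$ behaviour of $\frac{\grad^\gamma \hat E}{(1-\hat D)(1-\hat J)}$ by separately bounding the singularity of the two denominators near $k=0$ and the numerator's vanishing at $k=0$. Away from the origin, say on $\{\abs k \ge 1\}$, both $1-\hat D$ and $1-\hat J$ are bounded below by $\KIR \wedge K_{\mathrm{IR},\Sigma}$ by the infrared bounds \eqref{eq:infrared} and \eqref{eq:D_infrared}, so there the quotient is controlled by $\grad^\gamma \hat E$ alone, which lies in $L^2\cap L^{d/(\abs\gamma-2)}\loc$ (or the improved $L^{d/(\abs\gamma-2-\eps)}\loc$) by Lemma~\ref{lem:h}(i) applied to $h=E$ with exponent $a = d-2$; one first checks $E$ satisfies \eqref{eq:ass_h}--\eqref{eq:h_hyp}, which follows because $g*J$, $g*J*D$, $D*D$, $D*D*J$ all inherit the moment bounds of Assumption~\ref{ass:J} under convolution (moments add, and $D$ is Gaussian hence has all moments). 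The real content is near $k=0$, where $(1-\hat D)(1-\hat J) \asymp \abs k^4$.

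For the near-origin estimate I would Taylor-expand $\hat E(k)$. By evenness of $E$ (each of the four convolutions is even), $\hat E$ is even, so $\hat E(0)=0$ and $\grad_i\hat E(0)=0$. The crucial point, and the reason for the choice \eqref{eq:def_Sigma}, is that the Hessian of $\hat E$ at $0$ also vanishes: writing $\hat E = \hat g\hat J(1-\hat D) - \hat g(0)\hat D^2(1-\hat J)$ from \eqref{eq:def_E}, one computes $\del_i\del_j$ at $k=0$ using $\hat J(0)=\hat D(0)=1$, $\hat g(0)=\int g$, and the second-moment identities $-\del_i\del_j\hat J(0) = \int x_ix_j J(x)dx = \Sigma_{ij}$ (diagonal by evenness) and $-\del_i\del_j\hat D^2(0) = 2\Sigma_{ij}$ (since $D$ has covariance $\Sigma$, so $\hat D(k) = e^{-k\cdot\Sigma k/2}$ and $\hat D^2 = e^{-k\cdot\Sigma k}$). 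The two second-order contributions cancel exactly, giving $\grad^\gamma\hat E(0)=0$ for all $\abs\gamma\le 2$. Then, using the extra moment hypothesis \eqref{eq:J_2+eps} (which gives a bit of Hölder-type regularity of the third derivatives, or equivalently a modulus of continuity of the Hessian), a Taylor estimate yields $\abs{\hat E(k)} \lesssim \abs k^{2+\eps''}$ near $0$ for some $\eps''>0$, and more generally $\abs{\grad^\gamma\hat E(k)} \lesssim \abs k^{2+\eps''-\abs\gamma}$ for $\abs\gamma$ up to $2$, while for $\abs\gamma\ge 3$ one falls back on the $L^{d/(\abs\gamma-2-\eps)}\loc$ bound of Lemma~\ref{lem:h}. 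Dividing by $(1-\hat D)(1-\hat J)\asymp\abs k^4$ and applying Hölder with $\abs k^{-4}\in L^p\loc$ for $p\inv > 4/d$, together with $\grad^\gamma\hat E\in L^{d/(\abs\gamma-2-\eps)}\loc$, gives the product in $L^q\loc$ with $q\inv > (\abs\gamma-2-\eps)/d + 4/d = (\abs\gamma+2-\eps)/d$, which is exactly \eqref{eq:EAF} with $\eps'=\eps$ once $\eps<1$ is fixed small enough uniformly over the finitely many $\abs\gamma\le d-2$; the small-$k$ region where $\abs\gamma\le 2$ is handled directly by the pointwise bound $\abs k^{2+\eps''-\abs\gamma-4} = \abs k^{\eps''-2-\abs\gamma}$, which is locally in $L^q$ for $q\inv < (2+\abs\gamma-\eps'')/d$ — wait, one must be slightly careful here and instead simply note this pointwise bound is even better than required, being locally in $L^q$ for any $q\inv > (\abs\gamma+2-\eps'')/d$ as long as the exponent stays in the locally integrable range, i.e.\ $(\abs\gamma+2-\eps'')\vee 0 < d$, which holds since $\abs\gamma\le d-2$.

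The main obstacle I anticipate is the bookkeeping around the third-derivative / modulus-of-continuity step: Assumption~\ref{ass:J} only gives $\abs x^{2+\eps}h(x)\in L^1$, not $\abs x^3 h(x)\in L^1$, so one cannot differentiate $\hat E$ a third time classically; instead one must extract the gain from a fractional Taylor estimate, e.g.\ writing $\hat E(k) - (\text{quadratic part}) = \int E(x)(e^{ik\cdot x} - 1 - ik\cdot x + \tfrac12(k\cdot x)^2)dx$ and bounding the bracket by $\min\{\abs{k\cdot x}^2, \abs{k\cdot x}^3\} \lesssim \abs k^{2+\eps}\abs x^{2+\eps}$, and similarly for $\grad^\gamma\hat E$ with $\abs\gamma\in\{1,2\}$ after noting the $x^\gamma$-weight is absorbed into the available $\abs x^{2+\eps}$ moment only when $\abs\gamma\le 2$ (it is, with room to spare since $\abs\gamma + $ the needed power stays $\le 2+\eps$ only if $\abs\gamma=0$; for $\abs\gamma=1,2$ one instead tracks that $\abs x^{\abs\gamma}\cdot\abs{k}^{2+\eps-\abs\gamma}\abs x^{2+\eps-\abs\gamma}$ requires $\abs x^{2+\eps}h\in L^1$, which is exactly what is assumed). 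Verifying that $E$ satisfies \eqref{eq:ass_h}--\eqref{eq:h_hyp} with $a=d-2$, including the $L^p$ condition \eqref{eq:J_d-2} when $d>4$, via Young's inequality for convolutions (each convolution with the all-moments Gaussian $D$ is harmless) is routine but must be stated. I would organise the proof as: (1) check $E$ obeys the hypotheses of Lemma~\ref{lem:h} with $a=d-2$; (2) the far-field bound on $\{\abs k\ge 1\}$ from the infrared bounds plus Lemma~\ref{lem:h}(i); (3) the vanishing of $\hat E$ and its derivatives up to order $2$ at $k=0$, using the choice of $\Sigma$; (4) the fractional Taylor estimate giving the $\abs k^{2+\eps''}$ gain near $0$; (5) combine via Hölder against $\abs k^{-4}$ and fix $\eps'$.
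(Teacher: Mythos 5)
Your proposal is correct and follows essentially the same route as the paper's proof: the case $3\le\abs\gamma\le d-2$ is handled by Lemma~\ref{lem:h}(i) with $h=E$, $a=d-2$ (using the strict inequality in \eqref{eq:J_d-2} to get the $\eps$-improved local space) and H\"older against $\abs k^{-4}\in L^p\loc$, while the case $\abs\gamma\le 2$ uses the vanishing of $\hat E$ and its derivatives through second order at $k=0$ (the role of the choice \eqref{eq:def_Sigma}) together with the fractional Taylor bound $\abs{\grad^\gamma\hat E(k)}\lesssim\abs k^{2+\eps-\abs\gamma}$ extracted from the $\abs x^{2+\eps}$ moment. This matches the paper's two-case argument in both structure and detail.
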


\begin{proof}
Recall from \eqref{eq:def_E} that the function $E$ is defined as
\begin{equation} \label{eq:E_pf}
E = (g*J - g*J*D) - \hat g(0) (D*D - D*D*J). 
\end{equation}
We estimate $\grad^\gamma \hat E$ in two cases. 

When $3 \le \abs \gamma \le d-2$, which only happens in dimensions $d>4$, 
we use Lemma~\ref{lem:h}(i) with 
$a=d-2$ and $h = E$.
The required moment conditions on $E$ follow from Assumpton~\ref{ass:J} and Young's convolution inequality. 
In particular, 
since the inequality $p < \frac d 4$ is strict in \eqref{eq:J_d-2}, 
we obtain the improved estimate
\begin{equation} 
\grad^\gamma \hat E \in L^{ \frac d {\abs \gamma - 2 - \eps} }\loc(\Rd)
	\qquad (3 \le \abs \gamma \le d-2 ) 
\end{equation}
with some $\eps \in (0,1]$ from \eqref{eq:h_gamma_3+}.
Combined with 
\begin{equation}
\biggabs{ \frac{ 1 }{ (1-\hat D)(1-\hat J) } (k) }
\lesssim \frac 1 { \abs k^4 \wedge 1 }
\in L^{ p }\loc(\Rd) 
	\qquad ( p \inv > \frac 4 d )
\end{equation}
from the infrared bounds \eqref{eq:infrared} and \eqref{eq:D_infrared},
H\"older's inequality implies that
\begin{align}
\frac{ \grad^{\gamma} \hat E }{ (1-\hat D)(1-\hat J) }
	\in L^{ r }\loc(\Rd)
	\quad \text{for} \quad
	\frac 1 r > \frac{ \abs \gamma - 2 - \eps } d + \frac 4 d 
	= \frac{ \abs \gamma + 2 - \eps} d. 
\end{align}
This gives the desired result with $\eps' \in (0,\eps')$.

When $\abs \gamma \le 2$, 
we expand $\grad^\gamma \hat E(k)$ using the choice of $\Sigma$ in \eqref{eq:def_Sigma}.
We first observe that
\begin{equation}
\int_\Rd E(x) dx = \hat E(0) = \hat g(0) \hat J(0) [1-\hat D(0)] - \hat g(0) \hat D(0)^2 [1-\hat J(0)] = 0
\end{equation}
since $\hat J(0) = \hat D(0) = 1$,
that $\int_\Rd x_i x_j E(x) dx = 0$ for all $i\ne j$ by evenness of $E(x)$, 
and that
\begin{equation}
\begin{aligned}
-\int_\Rd x_i^2 E(x) dx
=\grad_{ii} \hat E(0) 
&= \hat g(0) \hat J(0) [ - \grad_{ii} \hat D(0) ] - \hat g(0) \hat D(0)^2 [ - \grad_{ii} \hat J(0) ] 	\\
&= \hat g(0) \bigg( \int_\Rd x_i^2 D(x) dx - \int_\Rd x_i^2 J(x) dx \bigg)
= 0
\end{aligned}
\end{equation}
for all $i$ by the choice of $\Sigma$. 
Then, with $h_x(k) = \cos(k\cdot x) - 1 + \frac{ (k\cdot x)^2 }{2!}$, 
we can write
\begin{equation}
\hat E(k) = \int_\Rd E(x) \cos(k\cdot x) dx = \int_\Rd E(x) h_x(k) dx.
\end{equation}
Since $\abs x^{2+\eps_2} E(x) \in L^1(\Rd)$ for some $\eps_2 \in (0,2]$
by \eqref{eq:J_2+eps} in Assumption~\ref{ass:J}
and Young's convolution inequality, 
by expanding $\abs{ \grad^\gamma h_x(k) } \lesssim \abs k^{2+\eps_2 - \abs \gamma} \abs x^{2+\eps_2}$,
we have
\begin{align}
\abs{ \grad^\gamma \hat E (k) }
\le \int_\Rd \abs {E(x)} \abs{ \grad^\gamma h_x(k) } dx
\lesssim  \abs k^{2+\eps_2 - \abs \gamma} \int_\Rd  \abs x^{2+\eps_2} \abs{E(x)} dx 
\lesssim \abs k^{2+\eps_2 - \abs \gamma} .
\end{align}
Combined with the infrared bounds \eqref{eq:infrared} and \eqref{eq:D_infrared}, we finally get
\begin{align} 
\biggabs{ \frac{ \grad^{\gamma} \hat E }{ (1-\hat D)(1-\hat J) } (k)  }
&\lesssim  \frac{ \abs k^{2 + \eps_2 - \abs \gamma} }{ \abs k^2 \abs k ^2 } \1_\B
	+ \norm{ \grad^\gamma \hat E}_\infty 
=  \frac{ 1 }{ \abs k^{\abs \gamma + 2 - \eps_2 } } \1_\B
	+ \norm{ \grad^\gamma \hat E }_\infty  ,
\end{align}
which is in $L^r\loc(\Rd)$ when $r\inv > ( \abs \gamma + 2 - \eps_2 )/d$.
Taking $\eps' \in (0, \eps_2)$ then gives the desired result. 
\end{proof}

\begin{proof}[Proof of Proposition~\ref{prop:f}]
As discussed in the paragraph containing \eqref{eq:f_alpha_decomp_0}, our goal is to prove that $\eqref{eq:f_alpha_decomp_0} \in L^1(\Rd)$. 
Under Assumption~\ref{ass:J},
we can apply all lemmas in Section~\ref{sec:regularity} with $a = (d-2) \vee 2$ to $h = J \text{ or } g$.
Since $D(x)$ decays exponentially and satisfies the infrared bound \eqref{eq:D_infrared}, the lemmas apply to $h=D$ too.

We first prove that \eqref{eq:f_alpha_decomp_0} is locally integrable.
Since \eqref{eq:f_alpha_decomp_0} is a product, 
we use H\"older's inequality, 
with Lemma~\ref{lem:h}(ii) on the first and the last factor, 
and with Lemma~\ref{lem:E} on the middle factor,
to get that $\eqref{eq:f_alpha_decomp_0} \in L^r\loc(\Rd)$ for
\begin{equation}
\frac 1 r > \frac{ \sum_{l=1}^m \abs{ \delta_l } } d
	+ \frac{ \abs{ \alpha_2 } + 2 - \eps' } d
	+ \frac{ \sum_{i=1}^n \abs{ \gamma_i } } d
= \frac { \abs \alpha + 2 - \eps' } d.
\end{equation}
Since $\abs \alpha \le d-2$ and $\eps'>0$, 
we can take $r = 1$ to get that \eqref{eq:f_alpha_decomp_0} is locally integrable.

To improve to $L^1(\Rd)$, 
it suffices to prove that \eqref{eq:f_alpha_decomp_0} is integrable on $\{ \abs k \ge 1 \}$.
On this domain, we can use the infrared bounds \eqref{eq:infrared} and \eqref{eq:D_infrared} to bound all denominators of \eqref{eq:f_alpha_decomp_0} by constants.
Then it suffices to show that
\begin{equation} \label{eq:f_alpha_decomp_2}
\left( \prod_{l=1}^m {  \grad^{\delta_ l} \hat D  } \right)
( \grad^{\alpha_2} \hat E )
\left( \prod_{i=1}^n {  \grad^{\gamma_ i} \hat J  } \right)
\end{equation}
is integrable over $\{ \abs k \ge 1 \}$.
We use Lemma~\ref{lem:prod_h} for this.
Since $\hat E = \hat g \hat J (1-\hat D) - \hat g(0) \hat D^2(1-\hat J)$ by the definition of $E$ in \eqref{eq:def_E}, 
once $\grad^{\alpha_2} \hat E$ is expanded using the product rule, we always have at least two factors of derivatives of $\hat g$, $\hat J$, or $\hat D$ in the product \eqref{eq:f_alpha_decomp_2}.
This allows the use of Lemma~\ref{lem:prod_h}.
As 
\begin{equation}
{-2 }
+ \( \sum_{l=1}^m \abs{\delta_ l} \)
+ \abs {\alpha_2} 
+ \( \sum_{i=1}^n \abs{\gamma_ i} \)
= -2 + \abs \alpha 
\le d-4, 
\end{equation}
Lemma~\ref{lem:prod_h} implies that $\eqref{eq:f_alpha_decomp_2} \in L^r( \Rd)$ for all $ (1 - \frac 4 d) \vee 0 \le r\inv \le 1$, 
which includes the desired $r=1$.  
This completes the proof. 
\end{proof}

\section{Self-repellent Brownian motion: proof of Theorem~\ref{thm:BM}}
\label{sec:BM}

In this section we prove Theorem~\ref{thm:BM}, 
which improves the Gaussian domination bound \eqref{eq:BM_dominate} obtained in \cite{BKM24} to an asymptotic formula. 
The proof is a straightforward verification of the hypotheses of Theorem~\ref{thm:deconv} using results of \cite{BKM24}. 
For simplicity, we omit $\alpha$ in our notations and write $G_\lambda = G_{\alpha, \lambda}$, $\lambda_c = \lambda_c(\alpha)$. 

We begin with the lace expansion equation satisfied by $G_\lambda$, which will be rearranged to \eqref{eq:JG}. 
By \cite[Remark~2]{BKM24} and the Gaussian domination bound \eqref{eq:BM_dominate},
for all $\lambda \le \lambda_c$, 
$G_\lambda$ satisfies the convolution equation
\begin{equation} \label{eq:Glam}
G_\lambda = (\lambda \varphi_1 + \Pi_\lambda) + (\lambda \varphi_1 + \Pi_\lambda) * G_\lambda ,
\end{equation}
where 
$\varphi_1(x) = (2\pi)^{-d/2} \exp( - \abs x^2 / 2 )$
and 
$\Pi_\lambda: \Rd \to \R$ is an explicit power series in $\lambda$ (denoted by $G_\lambda^{(\Pi)}$ in \cite{BKM24}). 
By \cite[Lemma~4.4]{BKM24} (using \eqref{eq:BM_dominate} to verify the hypothesis), $\Pi_\lambda$ obeys the uniform bound
\begin{equation} \label{eq:Pi_decay}
\abs{ \Pi_\lambda(x) } \le \frac{ O (\alpha) }{ ( 1 + \abs x )^{3(d-2)} }
	\qquad (x\in \Rd,\ \lambda \le \lambda_c) .
\end{equation}
This implies $\hat \Pi_\lambda(0) \to \hat \Pi\crit(0)$ as $\lambda \to \lambda_c^-$, by the Dominated Convergence Theorem. 

\begin{proof}[Proof of Theorem~\ref{thm:BM}]
Let $J = g = \lambda_c \varphi_1 + \Pi\crit$,
so that \eqref{eq:Glam} rearranges into \eqref{eq:JG}.
To use Theorem~\ref{thm:deconv},
we need to verify Assumption~\ref{ass:J} and 
to show that $g(x) = o(\abs x^{-(d-2)})$. 

We begin with the decay.
From \eqref{eq:Pi_decay} and the exponential decay of $\varphi_1$,
we immediately get 
\begin{equation}
\abs{ J(x) } = \abs{ g(x) }
\lesssim \frac 1 { (1 + \abs x)^{3(d-2)} } 
\qquad (x\in \Rd) .
\end{equation}
This also verifies \eqref{eq:h_decay} with $\rho = 2(d-4)$, which is strictly larger than $ \frac{d-8}2 \vee 0$ in all dimensions $d > 4$, so it implies \eqref{eq:J_hyp}--\eqref{eq:J_d-2} in Assumption~\ref{ass:J}.
For $\hat J(0) = 1$, 
we first consider $\lambda < \lambda_c$ and take the Fourier transform of \eqref{eq:Glam}. Since $\hat G_\lambda(0) = \norm{ G_\lambda }_1 \to \infty$ as $\lambda \to \lambda_c^-$ by the Monotone Convergence Theorem, by sending $\lambda \to \lambda_c^-$ in the transformed equation we get
\begin{equation} \label{eq:J_hat_0}
\hat J(0)
= \lambda_c \hat \varphi_1(0) + \hat \Pi\crit(0)
= \lim_{\lambda \to \lambda_c^-} 
	\frac{ \hat G_\lambda(0) } { 1 + \hat G_\lambda(0) }
=1. 
\end{equation}
Finally, since
\begin{equation}
\hat J(0) - \hat J(k)
= \lambda_c [ \hat \varphi_1(0) - \hat \varphi_1(k) ] 
	+ [ \hat \Pi\crit (0) - \hat \Pi\crit(k) ] ,
\end{equation}
and since
\begin{equation}
\abs{ \hat \Pi\crit (0) - \hat \Pi\crit(k) } 
\le O(\alpha) (\abs k^2 \wedge 1)
	\qquad (k \in \R^d)
\end{equation}
by \eqref{eq:Pi_decay} and Taylor's theorem,
the infrared bound for $J$ follows from that for $\varphi_1$ (obtained via a computation using the Gaussian function $\hat \varphi_1(k)$), by picking $\alpha$ small enough. 
This verifies all of Assumption~\ref{ass:J}. 

Since $d > 4$ and since $G\crit - g \in L^2(\Rd)$ by the Gaussian domination bound \eqref{eq:BM_dominate},
we know $G\crit - g$ is equal to the Fourier integral $H$ defined in \eqref{eq:Hint}, by the uniqueness of $L^2$ solutions.
Therefore, we can use Theorem~\ref{thm:deconv} to get the asymptotics of $G\crit = G$. 
Since $\int_\Rd g(y) dy = \hat g(0) = \hat J(0) = 1$ by \eqref{eq:J_hat_0}, 
and since $\Sigma = \sigma^2 \times \mathrm{Id}$ where $\sigma^2 = \int_\Rd x_1^2 J(x) dx$, we get
\begin{equation}
G\crit(x) \sim H(x) \sim  \frac{ a_d }{ \sigma^2}\frac 1{ \abs x^{d-2} }
	\qquad \text{as $\abs x \to \infty$}. 
\end{equation}
Also, since $\lambda_c = 1 - \hat \Pi\crit (0) = 1 - O(\alpha)$ by \eqref{eq:J_hat_0} and \eqref{eq:Pi_decay}, we have
\begin{equation}
\sigma^2
= \lambda_c \int_\Rd x_1^2 \varphi_1(x) dx + \int_\Rd x_1^2 \Pi\crit(x) dx
= [1 + O(\alpha)]( 1) + O(\alpha)
= 1 + O(\alpha) .
\end{equation}
This concludes the proof of Theorem~\ref{thm:BM}. 
\end{proof}

\section*{Acknowledgements}
The work was supported in part by NSERC of Canada.
We thank Matthew Dickson for discussions and for comments on a preliminary version.
We thank Gordon Slade for comments on a preliminary version.

\end{document}